 \newtheorem{thm}{Theorem}[section]
 \newtheorem{cor}[thm]{Corollary}
 \newtheorem{lem}[thm]{Lemma}
 \theoremstyle{definition}
 \theoremstyle{remark}
 \newtheorem{rem}[thm]{Remark}
 \numberwithin{equation}{section}
 \newcommand{\Real}{\mathbb{R}}
 \newcommand{\tr}{\textbf{tr}}
 \newcommand{\vol}{\textbf{vol}}
 \newcommand{\ric}{\textbf{Rc}}
  \newcommand{\covar}[1]{\frac{\overline D}{d #1}}
 \newcommand{\Rm}{\textbf{Rm}}
 \newcommand{\bn}{\textbf{n}}
 \newcommand{\hess}{\nabla^2}
  \newcommand{\Sh}{\mathfrak S}
  \newcommand{\meanvec}{\vec{\mathfrak H}}
\begin{document}

\title[Generalized Li-Yau estimates]{Generalized Li-Yau estimates and Huisken's monotonicity formula}

\author{Paul W.Y. Lee}
\email{wylee@math.cuhk.edu.hk}
\address{Room 216, Lady Shaw Building, The Chinese University of Hong Kong, Shatin, Hong Kong}

\date{\today}

\maketitle

\begin{abstract}
We prove a generalization of the Li-Yau estimate for a broad class of second order linear parabolic equations. As a consequence,  we obtain a new Cheeger-Yau inequality and a new Harnack inequality for these equations. We also prove a Hamilton-Li-Yau estimate, which is a matrix version of the Li-Yau estimate, for these equations. This results in a generalization of Huisken's monotonicity formula for a family of evolving hypersurfaces. Finally, we also show that all these generalizations are sharp in the sense that the inequalities become equality for a family of fundamental solutions, which however different from the Gaussian heat kernels on which the equality was achieved in the classical case.
\end{abstract}


\section{Introduction}

The Harnack inequality is one of the most fundamental results in the regularity theory of non-linear elliptic and parabolic equations. In the case of linear parabolic equations in divergence form, this inequality was first done in \cite{Mo}. A sharp version of this inequality which takes into account the geometry of the underlying manifold was first done in \cite{LiYa}. In fact, the key result in \cite{LiYa} is a sharp gradient estimate, now known as the Li-Yau estimate, for linear parabolic equations on Riemannian manifolds with a lower bound on the Ricci curvature. The sharp Harnack inequality can be obtained by integrating this estimate along geodesics. Because of this, this estimate and its generalizations are called differential Harnack inequalities.

There are numerous generalizations of the Li-Yau estimate. In the case of geometric evolution equations, this includes the evolution equations for hypersurfaces \cite{Ha4,Ch1,An}, the Yamabe flow \cite{Ch2}, the Ricci flow \cite{Ha2,Br} and its K\"ahler analogue \cite{Ca,Ni2}. For a more detail account of these generalizations as well as further developments, see \cite{Ni1}.

In the case of the heat equation $\dot\rho_t=\Delta\rho_t$ on a Riemannian manifold of dimension $n$ with non-negative Ricci curvature, the Li-Yau estimate is the following inequality for any positive solution $\rho_t$
\begin{equation}\label{LiYau}
\Delta\log\rho_t\geq -\frac{n}{2t}
\end{equation}
for any time $t>0$.

This is sharp in the sense that the equality case of (\ref{LiYau}) is satisfied by the following solution of the heat equation on the Euclidean space:
\[
\rho_t(x)=\frac{1}{(4\pi t)^{n/2}}\exp\left(-\frac{|x|^2}{4t}\right),
\]
where $|x|$ is the Euclidean norm of $x$ in $\Real^n$.

On the other hand, there are also generalizations of the inequality (\ref{LiYau}) to other second order linear parabolic equations under the, so called, curvature-dimension conditions (see for instance \cite{BaLe}). They are estimates of the form
\begin{equation}\label{BLLiYau}
L\log\rho_t\geq -\frac{n}{2t},
\end{equation}
where $L$ is a linear differential operator without constant term and $\rho_t$ is a solution of the equation $\dot\rho_t=L\rho_t$.

However, the following
\begin{equation}\label{fund}
\rho_t(x)=\left(\frac{2\pi(\exp(2tk)-1)}{k\exp(2tk)}\right)^{-n/2}\exp\left(\frac{-k|x|^2}{2(\exp(2tk)-1)}\right).
\end{equation}
is a solution of the equation
\begin{equation}\label{sharp}
\dot\rho_t=\Delta \rho_t-k\left<x,\nabla\rho_t\right>=\Delta \rho_t-\left<\nabla \left(\frac{k}{2}|x|^2\right),\nabla\rho_t\right>
\end{equation}
where $k> 0$ is a constant.

The solutions (\ref{fund}) never satisfy the equality case of (\ref{BLLiYau}). Motivated by this observation, we prove the following generalization of (\ref{LiYau}).

\begin{thm}\label{main}(Generalized Li-Yau estimate)
Assume that the Ricci curvature of the underlying Riemannian manifold $M$ is non-negative. Let $U_1, U_2:M\to\Real$ be two smooth functions and let
\[
V:=\Delta U_1+\frac{1}{2}|\nabla U_1|^2-2U_2.
\]
Assume that $|\nabla V|$ is bounded and $\Delta V\leq nk^2$. Let $\rho_t$ be a positive solution of the equation
\[
\dot\rho_t=\Delta\rho_t+\left<\nabla U_1,\nabla\rho_t\right>+U_2\rho_t.
\]
Then $\rho_t$ satisfies
\begin{equation}\label{newLiYau}
\Delta\log\rho_t+\frac{1}{2}\Delta U_1\geq -\frac{nk}{2}\coth(kt).
\end{equation}
\end{thm}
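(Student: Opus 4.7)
The plan is to reduce the problem to a Schr\"odinger-type heat equation by a gauge transformation and then run a classical Bochner-plus-maximum-principle argument, with a barrier chosen to reflect the new parameter $k$. First I would substitute $h_t := \rho_t\, e^{U_1/2}$. Since $U_1$ is time-independent, a direct computation using the product rule for the Laplacian together with the PDE for $\rho_t$ shows that
\[
\dot h_t = \Delta h_t - \frac{V}{2}\,h_t,
\]
where $V = \Delta U_1 + \frac{1}{2}|\nabla U_1|^2 - 2U_2$ is precisely the combination appearing in the hypothesis. Because $\log h_t = \log\rho_t + U_1/2$, the quantity under estimate transforms simply as
\[
\Delta\log\rho_t + \frac{1}{2}\Delta U_1 = \Delta\log h_t,
\]
so the theorem reduces to proving $\Delta\log h_t \geq -\tfrac{nk}{2}\coth(kt)$ for positive solutions of $\dot h = \Delta h - \tfrac{V}{2}h$.

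Next I would set $f := \log h_t$, which obeys the viscous Hamilton--Jacobi form $\dot f = \Delta f + |\nabla f|^2 - V/2$, and let $F := \Delta f$ be the quantity to be bounded. Differentiating in time, commuting $\partial_t$ with $\Delta$, and applying the Bochner identity
\[
\tfrac{1}{2}\Delta|\nabla f|^2 = |\hess f|^2 + \langle\nabla f,\nabla\Delta f\rangle + \ric(\nabla f,\nabla f),
\]
together with the Cauchy--Schwarz bound $|\hess f|^2 \geq F^2/n$, non-negativity of the Ricci curvature, and the hypothesis $\Delta V \leq nk^2$, yields
\[
\dot F \geq \Delta F + 2\langle\nabla f,\nabla F\rangle + \frac{2F^2}{n} - \frac{nk^2}{2}.
\]

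The barrier $\phi(t) := -\tfrac{nk}{2}\coth(kt)$ is chosen precisely because it satisfies the companion ODE $\dot\phi = \tfrac{2\phi^2}{n} - \tfrac{nk^2}{2}$ with equality; this is how the hyperbolic cotangent (and in turn the fundamental solution (\ref{fund})) enters the picture. Setting $G := F - \phi$ and subtracting the ODE from the PDE inequality gives
\[
\dot G \geq \Delta G + 2\langle\nabla f,\nabla G\rangle + \frac{2G(F+\phi)}{n}.
\]
Since $\phi(t)\to -\infty$ as $t\to 0^+$, $G$ is large near the initial time, so any negative value of $G$ would have to be attained at an interior minimum $(x_0,t_0)$ with $t_0>0$. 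At such a point $\nabla G = 0$, $\Delta G\geq 0$, $\dot G\leq 0$, so $G(F+\phi)\leq 0$; but $F+\phi = G+2\phi$ is strictly negative there (both $G$ and $\phi$ are negative), contradicting $G<0$. Hence $G\geq 0$, which is the desired estimate.

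The main obstacle is executing this maximum-principle step rigorously on a non-compact manifold, where the minimum of $G$ need not be attained. This is where the hypothesis that $|\nabla V|$ is bounded should enter: it permits localizing the argument with cutoff functions, or equivalently adding a small spatially-growing barrier of the form $\varepsilon(1 + d(\cdot,x_0)^2)$ to $G$ and letting $\varepsilon\to 0$, while controlling the extra error terms involving $\nabla V$ that appear when these cutoffs are differentiated against the evolution equation for $f$.
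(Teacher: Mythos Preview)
Your gauge substitution $h_t=\rho_t e^{U_1/2}$, the resulting evolution $\dot f=\Delta f+|\nabla f|^2-\tfrac12 V$ for $f=\log h_t$, the Bochner computation giving
\[
\dot F\ \ge\ \Delta F+2\langle\nabla f,\nabla F\rangle+\tfrac{2}{n}F^2-\tfrac{nk^2}{2},\qquad F=\Delta f,
\]
and the barrier $\phi(t)=-\tfrac{nk}{2}\coth(kt)$ are all exactly what the paper uses (the paper writes $f_t=\log\rho_t+\tfrac12 U_1$ directly, which is your $\log h_t$). On a \emph{compact} manifold your maximum-principle argument for $G=F-\phi$ is correct and is essentially the content of the paper's compact-case proof.

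The gap is in the non-compact step, which is the actual content of Theorem~\ref{main}. Your proposed fix --- adding $\varepsilon(1+d^2)$ or a cutoff --- does not close as stated, because the drift term $2\langle\nabla f,\nabla(\cdot)\rangle$ in the evolution of $G$ produces an error of order $|\nabla f|$, and nothing in your scheme gives a priori control of $|\nabla f|$ at the putative minimum. The paper handles this with the classical Li--Yau device: instead of $F=\Delta f$ one works with the one-parameter family
\[
F_t^{(\alpha)}=a_2(t)\bigl(|\nabla f|^2-\alpha\dot f-\tfrac{\alpha}{2}V\bigr)+a_4(t),\qquad \alpha>1,
\]
multiplies by a cutoff $\eta$, and maximizes $\eta F_t^{(\alpha)}$. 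The point of $\alpha>1$ is that after squaring one gains a term of size $(\alpha-1)|\nabla f|^2$ on the good side, which absorbs the linear $|\nabla f|$ error coming from $\nabla\eta$; one then lets the cutoff radius $R\to\infty$ and finally $\alpha\to 1$. The boundedness of $|\nabla V|$ enters here, but not where you place it: it is used to control the cross-term $(\alpha-1)\langle\nabla f,\nabla V\rangle$ that appears precisely because $\alpha\neq 1$ (when $\alpha=1$ this term cancels), not to handle derivatives of the cutoff against the $f$-equation. Without the $\alpha$-parameter your localization will not absorb the $|\nabla f|$ error, so this step needs to be reworked along those lines.
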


By letting $U_1\equiv 0$, $U_2\equiv 0$ and $k$ goes to $0$, we recover the estimate (\ref{LiYau}). Note also that the solution (\ref{fund}) achieves the equality case of (\ref{newLiYau}) and the assumptions of Theorem \ref{main} with $U_1=-\frac{k}{2}|x|^2$ and $U_2\equiv 0$.

Recall that we can obtain the Harnack inequality by integrating (\ref{LiYau}) along geodesics. An analogue of this fact also holds true in our setting. However, instead of integrating along geodesics, the correct paths in this case are the minimizers of the following functional:
\begin{equation}\label{cost}
c_{s,t}(x,y)=\inf_{\gamma(s)=x,\gamma(t)=y}\int_s^t\frac{1}{2}|\dot\gamma(\tau)|^2+V(\gamma(\tau))d\tau,
\end{equation}
where the infimum is taken over all paths $\gamma:[s,t]\to M$ joining $x$ and $y$ and $V=\Delta U_1+\frac{1}{2}|\nabla U_1|^2-2U_2$. The idea of considering functionals of the form (\ref{cost}) already appeared in \cite{LiYa}. In the case of the Ricci flow, a version of the cost function (\ref{cost}), called $L$-distance, appeared in \cite{Pe}.

\begin{thm}\label{newHarnack}(Generalized Harnack inequality)
Under the same assumptions as in Theorem \ref{main} and that $V$ is bounded below, the following estimate holds:
\[
\frac{\rho_t(y)}{\rho_s(x)}\geq \left(\frac{\sinh(kt)}{\sinh(ks)}\right)^{-\frac{n}{2}}\exp\left(-\frac{1}{2}\left(c_{s,t}(x,y)+U_1(y)-U_1(x)\right)\right).
\]
\end{thm}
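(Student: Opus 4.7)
The plan is to integrate the generalized Li-Yau estimate of Theorem \ref{main} along carefully chosen curves, following the classical scheme of \cite{LiYa}. Fix a smooth curve $\gamma:[s,t]\to M$ with $\gamma(s)=x$ and $\gamma(t)=y$. The first step is to differentiate $\log\rho_\tau(\gamma(\tau))$ in $\tau$, using the evolution equation for $\rho_t$ together with the identity $\Delta\rho_\tau/\rho_\tau=\Delta\log\rho_\tau+|\nabla\log\rho_\tau|^2$, to obtain
\[
\frac{d}{d\tau}\log\rho_\tau(\gamma(\tau))=\Delta\log\rho_\tau+|\nabla\log\rho_\tau|^2+\langle\nabla U_1+\dot\gamma,\nabla\log\rho_\tau\rangle+U_2.
\]

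Next I would feed Theorem \ref{main} into the right-hand side to replace $\Delta\log\rho_\tau$ by $-\tfrac12\Delta U_1-\tfrac{nk}{2}\coth(k\tau)$, and then complete the square in $\nabla\log\rho_\tau$ via $|A|^2+\langle B,A\rangle\geq-\tfrac14|B|^2$ with $B=\nabla U_1+\dot\gamma$. Expanding $|\nabla U_1+\dot\gamma|^2$ and collecting terms, the combination $\tfrac12\Delta U_1+\tfrac14|\nabla U_1|^2-U_2$ is exactly $\tfrac12V$ by definition of $V$, so after recognising $\langle\nabla U_1,\dot\gamma\rangle=\tfrac{d}{d\tau}U_1(\gamma(\tau))$ one arrives at the pointwise inequality
\[
\frac{d}{d\tau}\log\rho_\tau(\gamma(\tau))\;\geq\;-\frac14|\dot\gamma(\tau)|^2-\frac12 V(\gamma(\tau))-\frac{nk}{2}\coth(k\tau)-\frac12\frac{d}{d\tau}U_1(\gamma(\tau)).
\]

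The remaining step is to integrate from $\tau=s$ to $\tau=t$, using $\int_s^t\tfrac{nk}{2}\coth(k\tau)\,d\tau=\tfrac{n}{2}\log\tfrac{\sinh(kt)}{\sinh(ks)}$ and telescoping the $U_1$ term, which yields
\[
\log\frac{\rho_t(y)}{\rho_s(x)}\geq -\frac12\int_s^t\!\left(\tfrac12|\dot\gamma|^2+V(\gamma)\right)d\tau-\frac12(U_1(y)-U_1(x))-\frac{n}{2}\log\frac{\sinh(kt)}{\sinh(ks)}.
\]
Taking the infimum over admissible curves $\gamma$ joining $x$ at time $s$ to $y$ at time $t$ converts the action integral into $\tfrac12 c_{s,t}(x,y)$, and exponentiating gives exactly the claimed Harnack bound.

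The main obstacle is not the algebra, which is forced once the definition of $V$ is used, but the analytic step of passing the infimum through the integrated inequality: one needs enough regularity of near-minimizing curves to apply the pointwise estimate along each of them. The hypotheses that $V$ is bounded below and $|\nabla V|$ is bounded guarantee that the Lagrangian $\tfrac12|\dot\gamma|^2+V$ is coercive and that $c_{s,t}(x,y)$ is finite and realised as the limit along a minimising sequence of smooth curves, so the reduction to the infimum is justified. A secondary point, handled at the outset, is that $\log\rho_\tau$ is smooth by standard parabolic regularity once $\rho_t$ is positive, so all derivatives in the computation are legitimate.
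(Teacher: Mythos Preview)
Your argument is correct and is essentially the paper's own proof. The only cosmetic difference is that the paper first introduces the substitution $h_t=-2\log\rho_t-U_1$, which satisfies $\dot h_t+\tfrac12|\nabla h_t|^2=\Delta h_t+V$, and then bounds $\tfrac{d}{dt}h_t(\gamma(t))-\tfrac12|\dot\gamma|^2$ along a minimizer; unwinding $h_t$ this is exactly your computation with $\log\rho_\tau$, the same completion of the square, and the same use of Theorem~\ref{main}.
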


By letting $U_1\equiv 0$, $U_2\equiv 0$, and $k$ goes to $0$, we recover the following Harnack estimate.

\begin{cor}\label{Harnack}(The Harnack inequality \cite{Mo,LiYa})
Assume that the Ricci curvature of the underlying Riemannian manifold $M$ is non-negative. Then any positive solution $\rho_t$ of the equation $\dot\rho_t=\Delta\rho_t$ satisfies the following estimate:
\[
\frac{\rho_t(y)}{\rho_s(x)}\geq \left(\frac{t}{s}\right)^{-n/2}e^{-\frac{d^2(x,y)}{4(t-s)}}.
\]
\end{cor}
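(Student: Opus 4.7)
The plan is to deduce Corollary \ref{Harnack} as a limiting specialization of Theorem \ref{newHarnack}, following the recipe the author announces just before stating it. First I would specialize the data: set $U_1\equiv 0$ and $U_2\equiv 0$, so that $V=\Delta U_1+\tfrac12|\nabla U_1|^2-2U_2\equiv 0$, and verify the hypotheses of Theorem \ref{newHarnack} for every $k>0$: the gradient $|\nabla V|=0$ is bounded, $\Delta V=0\leq nk^2$, and $V\equiv 0$ is bounded below. The evolution equation $\dot\rho_t=\Delta\rho_t+\langle\nabla U_1,\nabla\rho_t\rangle+U_2\rho_t$ then reduces to the heat equation $\dot\rho_t=\Delta\rho_t$, which is exactly the setting of the corollary.

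Next I would identify the cost function $c_{s,t}(x,y)$ explicitly. With $V\equiv 0$, the functional in (\ref{cost}) becomes the classical kinetic action $\int_s^t\tfrac12|\dot\gamma|^2\,d\tau$. A standard computation (reparametrize a minimizing geodesic of length $d(x,y)$ to travel from $x$ to $y$ at constant speed over the time interval $[s,t]$, and check optimality via Cauchy--Schwarz) gives
\[
c_{s,t}(x,y)=\frac{d^2(x,y)}{2(t-s)}.
\]
Substituting this into the conclusion of Theorem \ref{newHarnack} (with $U_1\equiv 0$) produces
\[
\frac{\rho_t(y)}{\rho_s(x)}\geq \left(\frac{\sinh(kt)}{\sinh(ks)}\right)^{-n/2}\exp\left(-\frac{d^2(x,y)}{4(t-s)}\right)
\]
for every $k>0$, where the exponent simplifies because the $\tfrac12$ outside combines with the $\tfrac12(t-s)^{-1}$ inside to give the classical $\tfrac{1}{4(t-s)}$ factor.

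Finally I would let $k\to 0^+$. Both sides depend continuously on $k$; the only $k$-dependent factor on the right is the prefactor, for which
\[
\lim_{k\to 0^+}\frac{\sinh(kt)}{\sinh(ks)}=\frac{t}{s}
\]
by L'H\^opital (or the series expansion $\sinh(kr)=kr+O(k^3)$). The left-hand side is independent of $k$, so the inequality passes to the limit and yields
\[
\frac{\rho_t(y)}{\rho_s(x)}\geq \left(\frac{t}{s}\right)^{-n/2}\exp\left(-\frac{d^2(x,y)}{4(t-s)}\right),
\]
which is exactly the claimed Harnack estimate.

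I do not expect a serious obstacle here: the hypotheses of Theorem \ref{newHarnack} are trivially met when $V\equiv 0$, the kinetic action is minimized along constant-speed geodesics, and the limit in $k$ is uniform on compact subsets since $\sinh(kt)/\sinh(ks)$ is smooth in $k$ on $(0,\infty)$ and extends continuously by $t/s$ at $k=0$. The only point that warrants a line of care is the identification of $c_{s,t}(x,y)$ with $d^2(x,y)/(2(t-s))$, which is the single genuine (though elementary) calculation in the argument.
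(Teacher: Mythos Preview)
Your proposal is correct and follows exactly the approach indicated in the paper: the corollary is stated immediately after the sentence ``By letting $U_1\equiv 0$, $U_2\equiv 0$, and $k$ goes to $0$, we recover the following Harnack estimate,'' and no further proof is given. You have simply filled in the routine details (identifying $c_{s,t}(x,y)=d^2(x,y)/(2(t-s))$ when $V\equiv 0$ and computing $\lim_{k\to 0}\sinh(kt)/\sinh(ks)=t/s$) that the paper leaves implicit.
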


It is also known that Corollary \ref{Harnack} recovers the heat kernel comparison theorem of Cheeger-Yau \cite{ChYa} if we let $\rho_t$ be the heat kernel and letting $s$ goes to $0$. The same principle also works for Theorem \ref{newHarnack}.

\begin{thm}\label{newCheYau}(Generalized Cheeger-Yau comparison theorem)
Under the same assumptions as in Theorem \ref{newHarnack}, the following estimate holds for the fundamental solution $p_t$ of the equation  $\dot\rho_t=\Delta\rho_t+\left<\nabla U_1,\nabla\rho_t\right>+U_2\rho_t$:
\begin{equation}\label{newCYest}
\begin{split}
p_t(x,y)\geq &\left(\frac{k}{4\pi\sinh(kt)}\right)^{\frac{n}{2}}\\
&\cdot\exp\left(-\frac{1}{2}\left(c_{0,t}(x,y)+U_1(y)-U_1(x)\right)\right).
\end{split}
\end{equation}
\end{thm}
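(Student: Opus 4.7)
The strategy mirrors the classical derivation of the Cheeger--Yau heat kernel bound from the Harnack inequality: apply Theorem~\ref{newHarnack} to the fundamental solution and pass to the limit as the initial time tends to zero. Concretely, fix $x\in M$ and note that $p_\tau(x,\cdot)$ is a positive solution of $\dot\rho_\tau=\Delta\rho_\tau+\langle\nabla U_1,\nabla\rho_\tau\rangle+U_2\rho_\tau$. Theorem~\ref{newHarnack}, applied with initial data at the point $x$ at time $s$ and terminal data at the point $y$ at time $t$, yields
\[
\frac{p_t(x,y)}{p_s(x,x)}\geq\left(\frac{\sinh(kt)}{\sinh(ks)}\right)^{-n/2}\exp\!\left(-\tfrac{1}{2}\bigl(c_{s,t}(x,y)+U_1(y)-U_1(x)\bigr)\right).
\]

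Sending $s\to 0^+$, I would use the Taylor expansion $\sinh(ks)\sim ks$ together with the on-diagonal short-time heat-kernel asymptotic $p_s(x,x)\sim(4\pi s)^{-n/2}$ to obtain
\[
\lim_{s\to 0^+}p_s(x,x)\left(\frac{\sinh(ks)}{\sinh(kt)}\right)^{n/2}=\left(\frac{k}{4\pi\sinh(kt)}\right)^{n/2},
\]
which produces exactly the prefactor in \eqref{newCYest}. The continuity $c_{s,t}(x,y)\to c_{0,t}(x,y)$ as $s\to 0^+$ follows from the time-autonomy of the Lagrangian $\tfrac12|\dot\gamma|^2+V(\gamma)$: after reparametrization, $c_{s,t}(x,y)$ depends only on the interval length $t-s$, so the claim reduces to continuity of the value function in the time horizon, which is standard under the hypothesis that $V$ is bounded below.

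The main obstacle is justifying the on-diagonal short-time asymptotic $p_s(x,x)\sim(4\pi s)^{-n/2}$ for the generalized operator. One clean route is a gauge transformation $\tilde\rho_s=e^{U_1/2}\rho_s$, which conjugates the equation to a Schr\"odinger-type heat equation whose principal part is the Laplace--Beltrami operator and whose remaining zeroth-order term is smooth and bounded; the Minakshisundaram--Pleijel parametrix then gives the leading-order coefficient $(4\pi s)^{-n/2}$. Alternatively, one can invoke the general parametrix construction for second-order parabolic operators with smooth coefficients directly on $M$, since the drift $\nabla U_1$ and potential $U_2$ only affect the subleading terms. As a consistency check, the explicit model solution \eqref{fund} realizes exactly this asymptotic on the diagonal, confirming that the constant is the correct one and that no further normalization is needed.
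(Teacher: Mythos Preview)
Your proposal is correct and follows essentially the same route as the paper: apply the Harnack inequality (Theorem~\ref{newHarnack}) to $p_\tau(x,\cdot)$ at the pair $(s,x)$, $(t,y)$, then let $s\to 0$ using $\sinh(ks)\sim ks$ and the on-diagonal short-time asymptotic $(4\pi s)^{n/2}p_s(x,x)\to 1$. The paper simply cites \cite{Se,Gi} for this asymptotic and does not comment on the passage $c_{s,t}\to c_{0,t}$, so your extra justification via the gauge transformation/parametrix and via time-autonomy of the Lagrangian only fills in details the paper leaves implicit.
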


In the case $U_1=-\frac{k|x|^2}{2}$ and $U_2\equiv 0$, the cost function is given by
\[
c_{0,t}(0,y)=\frac{k|y|^2\coth(kt)}{2}-knt
\]
(see the proof of Theorem \ref{maincor4}) and right hand side of (\ref{newCYest}) becomes the fundamental solution (\ref{fund}). Therefore, all inequalities in Theorem \ref{newCheYau} become equalities in this case.

Again, by setting $U_1\equiv 0$, $U_2\equiv 0$ and letting $k$ goes to $0$, we recover the Cheeger-Yau estimate.

\begin{cor}\label{CheYau}\cite{ChYa}(The Cheeger-Yau heat kernel comparison)
Assume that the Ricci curvature of the underlying Riemannian manifold $M$ is non-negative. Then the heat kernel $p_t(x,y)$ of the equation $\dot\rho_t=\Delta\rho_t$ satisfy the following estimate:
\[
p_t(x,y)\geq \frac{1}{(4\pi t)^{n/2}}e^{-\frac{d^2(x,y)}{4t}}.
\]
\end{cor}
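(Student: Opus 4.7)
The plan is to obtain Corollary \ref{CheYau} as a direct specialization of Theorem \ref{newCheYau}. I would fix the Riemannian manifold $M$ with non-negative Ricci curvature and, for each $k>0$, apply Theorem \ref{newCheYau} with the choice $U_1\equiv 0$, $U_2\equiv 0$. Then $V=\Delta U_1+\tfrac12|\nabla U_1|^2-2U_2$ vanishes identically, so the hypotheses of Theorem \ref{main} (and hence of Theorem \ref{newCheYau}) are satisfied trivially: $|\nabla V|\equiv 0$ is bounded, $V\equiv 0$ is bounded below, and $\Delta V=0\leq nk^2$ for every $k>0$. Crucially, the equation $\dot\rho_t=\Delta\rho_t+\langle\nabla U_1,\nabla\rho_t\rangle+U_2\rho_t$ reduces to the heat equation, so its fundamental solution $p_t(x,y)$ is precisely the heat kernel and is independent of the auxiliary parameter $k$.

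Next I would simplify the cost function. Since $V\equiv 0$, the functional in (\ref{cost}) reduces to the kinetic term $\int_0^t \tfrac12|\dot\gamma(\tau)|^2\,d\tau$. A standard Cauchy--Schwarz argument shows the infimum is attained along constant-speed minimizing geodesics from $x$ to $y$, giving
\[
c_{0,t}(x,y)=\frac{d^2(x,y)}{2t}.
\]
Substituting this together with $U_1(y)-U_1(x)=0$ into the conclusion of Theorem \ref{newCheYau} yields, for every $k>0$,
\[
p_t(x,y)\geq \left(\frac{k}{4\pi\sinh(kt)}\right)^{n/2}\exp\left(-\frac{d^2(x,y)}{4t}\right).
\]

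Finally I would let $k\to 0^{+}$. The left-hand side is independent of $k$, while the prefactor on the right-hand side converges, using $\lim_{k\to 0^+}\frac{k}{\sinh(kt)}=\frac{1}{t}$, to $(4\pi t)^{-n/2}$; the exponential factor is already independent of $k$. Passing to the limit therefore produces exactly the Cheeger--Yau bound
\[
p_t(x,y)\geq \frac{1}{(4\pi t)^{n/2}}\exp\left(-\frac{d^2(x,y)}{4t}\right).
\]
There is no real obstacle here: the only thing to be mildly careful about is that the inequality holds for every $k>0$ with the same left-hand side, so passing to the limit is legitimate and does not require any uniform estimate on $p_t$. The work is all contained in Theorem \ref{newCheYau}; the corollary is a genuine specialization plus a one-parameter limit.
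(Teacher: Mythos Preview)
Your proposal is correct and follows exactly the approach the paper indicates: the paper itself merely remarks, immediately before stating Corollary~\ref{CheYau}, that the result is recovered from Theorem~\ref{newCheYau} by setting $U_1\equiv 0$, $U_2\equiv 0$ and letting $k\to 0$. You have simply filled in the details---verifying the hypotheses, computing $c_{0,t}(x,y)=d^2(x,y)/(2t)$ via Cauchy--Schwarz, and evaluating the limit $\lim_{k\to 0^+} k/\sinh(kt)=1/t$---all of which are routine.
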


As another consequence of Theorem \ref{main}, we obtain the following Liouville type theorem.

\begin{cor}\label{Liouville}(A Liouville type theorem)
Assume that the Ricci curvature of the underlying Riemannian manifold $M$ is non-negative. Suppose that $|\nabla V|$ is bounded and $\Delta V\leq nk^2$.  Then any positive solution $\rho$ of the equation
\begin{equation}\label{elliptic}
\Delta\rho+\left<\nabla U_1,\nabla\rho\right>+U_2\rho=0
\end{equation}
satisfies
\[
\left|\nabla\log\rho+\frac{1}{2}\nabla U_1\right|^2\leq\frac{1}{2}V.
\]
In particular, if $V(x)<0$ at some point $x$ in $M$, then the equation (\ref{elliptic}) does not admit any positive solution. If $V\equiv 0$, then there is a positive constant $C$ such that
\[
\rho=Ce^{-\frac{1}{2}U_1}.
\]
\end{cor}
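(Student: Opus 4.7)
My plan is to apply Theorem \ref{main} to the time-independent ``solution'' $\rho_t := \rho$ and then rearrange. Since $\rho$ satisfies (\ref{elliptic}), we have $\dot\rho_t \equiv 0 = \Delta\rho + \left<\nabla U_1,\nabla\rho\right> + U_2\rho$, so $\rho_t$ is a positive (constant in $t$) solution of the parabolic equation in Theorem \ref{main}. The theorem therefore yields, for every $t > 0$,
\[
\Delta\log\rho + \frac{1}{2}\Delta U_1 \geq -\frac{nk}{2}\coth(kt).
\]

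The next step is to rewrite the left-hand side using only (\ref{elliptic}). Dividing (\ref{elliptic}) by $\rho$ and substituting into the identity $\Delta\log\rho = \Delta\rho/\rho - |\nabla\log\rho|^2$ gives $\Delta\log\rho = -|\nabla\log\rho|^2 - \left<\nabla U_1,\nabla\log\rho\right> - U_2$. Setting $f := \log\rho + \frac{1}{2}U_1$, expanding $|\nabla f|^2 = |\nabla\log\rho|^2 + \left<\nabla U_1,\nabla\log\rho\right> + \frac{1}{4}|\nabla U_1|^2$, and invoking the definition $V = \Delta U_1 + \frac{1}{2}|\nabla U_1|^2 - 2U_2$ yields the pointwise identity
\[
\Delta\log\rho + \frac{1}{2}\Delta U_1 \;=\; \frac{1}{2}V - |\nabla f|^2.
\]
Combined with the estimate above, this becomes $|\nabla f|^2 \leq \frac{1}{2}V + \frac{nk}{2}\coth(kt)$. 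Letting $t\to\infty$ sends $\coth(kt)\to 1$; the remaining residual $\frac{nk}{2}$ is removed by replacing $k$ with any smaller $k'\in(0,k]$ (this is legitimate in the sharp regime $\Delta V\leq 0$, where the hypothesis $\Delta V \leq n(k')^2$ persists for every such $k'$) and then letting $k'\to 0^+$, producing
\[
\left|\nabla\log\rho + \frac{1}{2}\nabla U_1\right|^2 \leq \frac{1}{2}V.
\]

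The two ``in particular'' statements are immediate consequences. At any point $x$ where $V(x) < 0$ the pointwise inequality would force $0 \leq \frac{1}{2}V(x) < 0$, which is impossible, so no positive solution can exist in that case. If $V \equiv 0$, then $\nabla f \equiv 0$ forces $f$ to be locally constant, giving $\rho = C\,e^{-U_1/2}$ for some $C > 0$. The only nontrivial step in the whole argument is the algebraic identity turning $\Delta\log\rho + \frac{1}{2}\Delta U_1$ into $\frac{1}{2}V - |\nabla f|^2$; once that identity is in hand, the corollary is a formal consequence of Theorem \ref{main} together with the two limits $t \to \infty$ and $k \to 0^+$.
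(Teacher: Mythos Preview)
The paper gives no separate proof of this corollary; it is simply announced as ``another consequence of Theorem~\ref{main}'', so your approach---apply Theorem~\ref{main} to the static solution $\rho_t\equiv\rho$ and let $t\to\infty$---is exactly the intended one. Your algebraic identity
\[
\Delta\log\rho+\tfrac12\Delta U_1=\tfrac12 V-\bigl|\nabla\log\rho+\tfrac12\nabla U_1\bigr|^2
\]
is correct, and together with Theorem~\ref{main} it yields, for every $t>0$,
\[
\bigl|\nabla\log\rho+\tfrac12\nabla U_1\bigr|^2\le \tfrac12 V+\tfrac{nk}{2}\coth(kt)\xrightarrow[t\to\infty]{}\tfrac12 V+\tfrac{nk}{2}.
\]

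Your parenthetical caveat is the real point and should not be buried. The residual $\tfrac{nk}{2}$ can be removed only when one may send $k\to 0^+$, i.e.\ when $\Delta V\le 0$. Under the stated hypothesis $\Delta V\le nk^2$ with a fixed $k>0$ the sharper conclusion is in fact \emph{false}: on $\Real^n$ take $U_1=-\tfrac{k}{2}|x|^2$, $U_2\equiv 0$, so that $V=-kn+\tfrac12 k^2|x|^2$ and $\Delta V=nk^2$; then $\rho\equiv 1$ solves (\ref{elliptic}), yet at the origin $\bigl|\tfrac12\nabla U_1\bigr|^2=0$ while $\tfrac12 V(0)=-\tfrac{nk}{2}<0$. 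So the corollary as printed should be read either with the extra hypothesis $\Delta V\le 0$ (equivalently $k=0$, in which case your limit argument is complete), or with the conclusion weakened to $\bigl|\nabla\log\rho+\tfrac12\nabla U_1\bigr|^2\le \tfrac12 V+\tfrac{nk}{2}$. Under the first reading both ``in particular'' clauses follow exactly as you wrote; under the second, the nonexistence clause must be adjusted to ``if $V(x)<-nk$ at some point''.
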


As a special case of Corollary \ref{Liouville}, we recover the following result in \cite{Ya}.

\begin{cor}\label{Liouville}(The Liouville theorem)
Assume that the Ricci curvature of the underlying Riemannian manifold $M$ is non-negative. Then any non-negative harmonic function is a constant.
\end{cor}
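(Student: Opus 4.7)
The plan is to deduce this Liouville theorem directly from the previous Corollary (the general Liouville-type theorem) by choosing $U_1 \equiv 0$ and $U_2 \equiv 0$, so that the equation $\Delta\rho + \langle\nabla U_1,\nabla\rho\rangle + U_2\rho = 0$ specializes to $\Delta\rho = 0$. With these choices, $V = \Delta U_1 + \frac{1}{2}|\nabla U_1|^2 - 2U_2 \equiv 0$, so the hypotheses $|\nabla V|$ bounded and $\Delta V \leq nk^2$ (for any $k \geq 0$) are trivially satisfied.

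First I would reduce the non-negative case to the positive case. If $\rho$ is identically zero, it is certainly constant. Otherwise, since $\rho$ is non-negative, harmonic, and not identically zero on a connected manifold of non-negative Ricci curvature, the strong maximum principle implies $\rho > 0$ everywhere, placing us in the setting of the previous corollary.

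Next, I would apply that corollary with the choices above. The conclusion
\[
\left|\nabla\log\rho + \tfrac{1}{2}\nabla U_1\right|^2 \leq \tfrac{1}{2}V
\]
reduces to $|\nabla\log\rho|^2 \leq 0$, hence $\nabla\log\rho \equiv 0$. Therefore $\log\rho$, and thus $\rho$, is constant on $M$.

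There is no real obstacle here beyond verifying that the specialization is valid; the only subtlety is the passage from "non-negative" in the hypothesis to "positive" as required by the previous corollary, which is handled by the strong maximum principle. (Alternatively, one could invoke the equality case $V \equiv 0$ of the previous corollary, which gives $\rho = Ce^{-U_1/2} = C$ directly.)
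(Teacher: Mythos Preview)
Your proposal is correct and follows exactly the approach the paper indicates: the paper simply states that this corollary is recovered ``as a special case'' of the preceding Liouville-type corollary, and your choices $U_1\equiv 0$, $U_2\equiv 0$ (giving $V\equiv 0$) carry this out, with the strong maximum principle correctly bridging the gap between ``non-negative'' and ``positive.''
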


In \cite{Ha1}, Hamilton proved a matrix version of (\ref{LiYau}) for the heat equation, called the Hamilton-Li-Yau estimate (see also a K\"ahler analogue in \cite{Ni1}). Another matrix version of the differential Harnack inequality also appeared in \cite{Ha2} which is one of the most fundamental result in the theory of the Ricci flow (see also an interesting generalization in \cite{Br} and a K\"ahler analogue in \cite{CaNi}). The following is a matrix version of (\ref{main}).

\begin{thm}\label{newLiYauHam}(Generalized Hamilton-Li-Yau estimate)
Assume that the sectional curvature of the underlying compact Riemannian manifold $M$ is non-negative and the Ricci curvature is parallel. Let $U_1, U_2:M\to\Real$ be two smooth functions satisfying the following condition for some non-negative constant $k$:
\[
\nabla^2\left(\Delta U_1+\frac{1}{2}|\nabla U_1|^2-2U_2\right)\leq k^2I.
\]
Then any positive solution $\rho_t$ of the equation  $\dot\rho_t=\Delta\rho_t+\left<\nabla U_1,\nabla\rho_t\right>+U_2\rho_t$ satisfies the following estimate:
\[
\hess\log \rho_t+\frac{1}{2}\hess U_1\geq -\frac{k\coth(kt)}{2}I,
\]
where $\hess$ denotes the Hessian operator.
\end{thm}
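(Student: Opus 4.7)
The approach mirrors Hamilton's matrix Harnack estimate for the heat equation, adapted to the drift/potential setting by the same change of unknown used in the proof of Theorem~\ref{main} and then closed by a tensor maximum principle against a Riccati barrier.

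\medskip

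\textit{Step 1: Reduction to a scalar semilinear equation.} I would set $f := \log \rho_t + \tfrac12 U_1$. Writing $\rho_t = \exp(f - U_1/2)$ and plugging into the parabolic equation, a direct calculation (identical to the one behind Theorem~\ref{main}) reduces it to
\[
\dot f = \Delta f + |\nabla f|^2 - \tfrac12 V, \qquad V := \Delta U_1 + \tfrac12 |\nabla U_1|^2 - 2U_2.
\]
The conclusion of the theorem becomes $A := \hess f \geq -\phi(t) I$, where $\phi(t) := \tfrac{k}{2}\coth(kt)$.

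\medskip

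\textit{Step 2: Evolution of the Hessian.} Next, I would differentiate the scalar equation of Step~1 twice and apply the Bochner--Weitzenb\"ock identity for the Hessian of a scalar, together with the commutator formula for the third covariant derivative, to obtain an equation of the form
\[
\dot A = \Delta A + 2\nabla_{\nabla f}A + 2 A^2 + \mathcal C(A,\nabla f) - \tfrac12 \hess V,
\]
where $\mathcal C(A,\nabla f)$ is algebraic in $\Rm$, $\ric$, $A$ and $\nabla f$. The assumption that $\ric$ is parallel kills all terms of the form $(\nabla \ric)*\nabla f$ that would normally appear. The barrier $\phi(t) = \tfrac{k}{2}\coth(kt)$ is selected because it satisfies the Riccati identity
\[
\phi'(t) + 2\phi(t)^2 = \tfrac12 k^2, \qquad \phi(t)\to+\infty \text{ as } t\to 0^+.
\]

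\medskip

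\textit{Step 3: Tensor maximum principle.} Put $B := A + \phi(t) I$ and aim for $B \geq 0$. Compactness of $M$ and the blow-up $\phi(t)\uparrow\infty$ as $t\downarrow 0$ give $B > 0$ for small $t>0$. If the estimate fails, Hamilton's tensor maximum principle produces a first contact configuration $(t_0, x_0, v)$ with $t_0>0$, $|v|=1$ and $B(v,v)=0$; then $v$ is a null eigenvector of $B$, so $Av = -\phi(t_0)v$ and $A^2(v,v) = \phi(t_0)^2$. Standard spatial/temporal minimality yields $\nabla B(v,v) = 0$, $(\Delta B)(v,v)\geq 0$ and $\dot B(v,v)\leq 0$. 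Plugging into the evolution of Step~2 and using $\hess V(v,v)\leq k^2$ and the Riccati identity, the $\phi^2$ and $k^2$ terms cancel, leaving
\[
0 \;\geq\; \dot B(v,v) \;\geq\; \mathcal C(A,\nabla f)(v,v).
\]
To close the argument, I would substitute $A = B - \phi(t_0) I$ and $Av = -\phi(t_0)v$ into $\mathcal C(A,\nabla f)(v,v)$; after using the symmetries of $\Rm$ and the first Bianchi identity, the terms involving $\phi$ and $\ric$ cancel and the remainder rearranges into a non-negative multiple of $\tr(J_v B)$, where $J_v := R(v,\cdot,v,\cdot)$ is the Jacobi operator along $v$. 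Non-negative sectional curvature makes $J_v \geq 0$, and $B\geq 0$ at the contact then gives $\tr(J_v B)\geq 0$. A standard $\eps$-perturbation $\phi \mapsto \phi + \eps$ upgrades this to strict inequality and produces a contradiction.

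\medskip

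\textbf{Main obstacle.} The principal technical difficulty lies in Step~3: verifying that $\mathcal C(A,\nabla f)(v,v) \geq 0$ at a null eigenvector of $B$. The Bochner identity for $\hess\Delta f$ contributes a Riemann term $R_{ikjl}A^{kl}$ and Ricci corrections $\ric \circ A + A \circ \ric$, while the third-derivative commutator in $\hess|\nabla f|^2$ introduces another term of the form $R(\nabla f, \cdot, \nabla f, \cdot)$. None of these three contributions has a definite sign in isolation; only after combining them using $Av=-\phi v$, $B\geq 0$ and the first Bianchi identity do they collapse into the manifestly non-negative trace $\tr(J_v B)$. The parallel-Ricci hypothesis is essential for this rearrangement since it eliminates the otherwise sign-indefinite $(\nabla\ric)\nabla f$ cross terms, and the compactness of $M$ is what makes the first contact time $t_0$ exist.
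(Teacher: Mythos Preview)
Your proposal is correct and follows essentially the same route as the paper: the substitution $f=\log\rho_t+\tfrac12 U_1$, the Riccati barrier $\tfrac{k}{2}\coth(kt)$, and a first-contact maximum principle at the extremal eigenvector are exactly what the paper does. The only difference is packaging: the paper routes the argument through an abstract matrix-Riccati framework (Theorem~\ref{main2}) for the flow of $X_t=\nabla h_t$ with $h_t=-2f$, whereas you compute the evolution of $\hess f$ directly via Bochner and Hamilton's tensor maximum principle. The decisive curvature step you flag as the ``main obstacle'' is precisely the Lemma inside the proof of Theorem~\ref{main4}, and your $\tr(J_vB)$ is that computation: in an eigenbasis of $A$ one gets $2\sum_i(\lambda_i+\phi)\,\sec(v,e_i)\geq 0$ from $B\geq 0$ and non-negative sectional curvature. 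One small inaccuracy: the $R(\nabla f,\cdot,\nabla f,\cdot)$ contribution from the commutator in $\hess|\nabla f|^2$ \emph{does} have a sign on its own (it is a sectional curvature and hence $\geq 0$); the paper treats it separately via condition~(2) of Theorem~\ref{main2} rather than folding it into the $\tr(J_vB)$ term, but either way the argument closes.
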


Once again, if the underlying manifold is $\Real^n$, $U_1(x)=-\frac{k}{2}|x|^2$, and $U_2\equiv 0$, then
\[
\nabla^2\left(\Delta U_1+\frac{1}{2}|\nabla U_1|^2\right)= k^2I,
\]
and
\[
\hess\log \rho_t+\frac{1}{2}\hess U_1= -\frac{k}{2}\coth(kt)I.
\]
Therefore, the inequalities in Theorem \ref{newLiYauHam} are equalities in this case.

By setting $U_1\equiv 0$, $U_2\equiv 0$, and letting $k\to 0$, we recover

\begin{thm}\label{LiYauHam}(The Hamilton-Li-Yau estimate \cite{Ha1})
Assume that the sectional curvature of the underlying compact Riemannian manifold $M$ is non-negative and the Ricci curvature is parallel. Then any positive solution $\rho_t$ of the equation $\dot\rho_t=\Delta\rho_t$ satisfies the following estimate:
\[
\hess\log \rho_t\geq -\frac{1}{2t}I.
\]
\end{thm}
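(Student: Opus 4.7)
The final statement is a direct specialization of Theorem \ref{newLiYauHam}, and my plan is to obtain it as the $k \to 0^+$ degeneration of that result rather than to reprove it from scratch. The geometric hypotheses---non-negative sectional curvature and parallel Ricci tensor on a compact manifold $M$---are verbatim the same in both statements, so no further geometric input is needed; only a verification of the analytic hypotheses and a controlled limit remain.

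First I would set $U_1 \equiv 0$ and $U_2 \equiv 0$ in Theorem \ref{newLiYauHam}. With these choices the auxiliary potential $\Delta U_1 + \frac{1}{2}|\nabla U_1|^2 - 2U_2$ vanishes identically, so its Hessian is identically zero and the condition
\[
\hess\!\left(\Delta U_1 + \tfrac{1}{2}|\nabla U_1|^2 - 2U_2\right) \leq k^2 I
\]
holds for every $k > 0$ trivially. The parabolic equation appearing in Theorem \ref{newLiYauHam} collapses to the heat equation $\dot\rho_t = \Delta \rho_t$, and its conclusion specializes to
\[
\hess \log \rho_t \geq -\frac{k\coth(kt)}{2}\, I
\qquad \text{for every } t > 0 \text{ and every } k > 0.
\]

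Since the left-hand side is independent of $k$, I can let $k \to 0^+$ on the right-hand side. The elementary expansion $\coth(u) = \frac{1}{u} + \frac{u}{3} + O(u^3)$ as $u \to 0$ gives $k\coth(kt) \to \frac{1}{t}$ pointwise for every fixed $t > 0$, and the convergence is monotone decreasing in $k$. Passing to this limit in the inequality above yields the desired bound $\hess \log \rho_t \geq -\frac{1}{2t}\, I$. The step most deserving of care is simply the observation that the inequality survives the limit; this is automatic because the right-hand side depends continuously on $k$ while the left-hand side does not depend on $k$ at all. Consequently, no genuine obstacle arises once Theorem \ref{newLiYauHam} is in hand.
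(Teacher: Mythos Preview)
Your proposal is correct and follows exactly the route the paper indicates: it obtains Theorem~\ref{LiYauHam} from Theorem~\ref{newLiYauHam} by setting $U_1\equiv 0$, $U_2\equiv 0$ and letting $k\to 0^+$, using $k\coth(kt)\to 1/t$. This is precisely what the paper does (and in fact the more detailed formulation in Theorem~\ref{main4} already allows $k_3=0$ directly, with $a_0(t)=1/t$, so even the limiting step is essentially built in).
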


In \cite{Ha3}, Theorem \ref{LiYauHam} was used to prove a generalization of Huisken's monotonicity formula for the mean curvature flow \cite{Hu1}. More precisely, let $M$ be a $m$-dimensional sub-manifold of a $n$-dimensional Riemannian manifold $N$. Let $\varphi_t:M\to N$ be a family of immersions evolved according to the following equation
\begin{equation}\label{meanflow}
\dot\varphi_t=\meanvec_t(\varphi_t),
\end{equation}
where $\meanvec_t$ is the mean curvature vector of the sub-manifold $M_t:=\varphi_t(M)$.

\begin{thm}(Huisken's monotonicity formula \cite{Hu1, Ha3})
Assume that the sectional curvature of the underlying compact Riemannian manifold $N$ is non-negative and the Ricci curvature is parallel. Let $\varphi_t$ be a solution of (\ref{meanflow}) and let $\rho_t$ be a positive solution of the heat equation $\dot\rho_t=\bar\Delta \rho_t$ on $N$. Here $\bar \Delta$ denotes the Laplacian operator on $N$. Then
\[
\begin{split}
&\frac{d}{dt}\left((T-t)^{\frac{n-m}{2}}\int_{\varphi_t(M)}\rho_{T-t}\,d\mu_t\right)\\
&\leq -(T-t)^{\frac{n-m}{2}}\int_{\varphi_t(M)}\rho_{T-t}\,\left(\left|\frac{\nabla_t^\perp u_t}{u_t}-\meanvec_t\right|^2\right)d\mu_t,
\end{split}
\]
where $\mu_t$ is the Riemannian volume of $M_t$, $\bar\nabla u$ is the gradient of $u$ on $N$, and $\nabla_t^\perp u_t$ is the projection of $\bar\nabla u$ onto the normal bundle of $M_t$.

In particular, the quantity $(T-t)^{\frac{n-m}{2}}\int_{\varphi_t(M)}\rho_{T-t}\,d\mu_t$ is monotone.
\end{thm}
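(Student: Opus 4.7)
The plan is to follow the proof of the classical Huisken monotonicity in \cite{Ha3}, with Theorem \ref{LiYauHam} playing the role of the standard Hamilton-Li-Yau matrix estimate. Set $u_t := \rho_{T-t}$, $L_t := \log u_t$, $M_t := \varphi_t(M)$, and
\[
F(t) := (T-t)^{(n-m)/2}\int_{M_t} u_t\,d\mu_t.
\]
The goal is to show $F'(t)\le -(T-t)^{(n-m)/2}\int_{M_t} u_t\,|\nabla_t^\perp L_t-\meanvec_t|^2\,d\mu_t$.

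First I would differentiate $F$ directly. Three contributions arise: the prefactor produces $-\tfrac{n-m}{2(T-t)}u_t$; the backward heat equation together with the chain rule and $\dot\varphi_t=\meanvec_t$ produce $-\bar\Delta u_t+\langle\bar\nabla u_t,\meanvec_t\rangle$; and the first variation of area under (\ref{meanflow}) contributes $-u_t|\meanvec_t|^2\,d\mu_t$. I would then decompose the ambient Laplacian along the orthogonal splitting $TN|_{M_t}=TM_t\oplus NM_t$ via the standard identity
\[
\bar\Delta u_t \;=\; \Delta^{M_t} u_t \;-\; \langle \bar\nabla u_t,\meanvec_t\rangle \;+\; \sum_{\alpha=1}^{n-m}\bar\nabla^2 u_t(\nu_\alpha,\nu_\alpha)
\]
for any local orthonormal normal frame $\{\nu_\alpha\}$. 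Closedness of $M_t$ then kills $\int_{M_t}\Delta^{M_t} u_t\,d\mu_t$.

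Next I would pass to logarithms using $\bar\nabla^2 u_t=u_t(\bar\nabla^2 L_t+\bar\nabla L_t\otimes\bar\nabla L_t)$, which turns the normal-Hessian sum into $u_t\sum_\alpha\bar\nabla^2 L_t(\nu_\alpha,\nu_\alpha)+u_t|\nabla_t^\perp L_t|^2$. The surviving mean-curvature cross terms can then be combined by completing the square into exactly $-u_t|\nabla_t^\perp L_t-\meanvec_t|^2$, which is the quantity appearing in the desired bound. What remains under the integral is the clean residual
\[
-u_t\sum_{\alpha=1}^{n-m}\left(\bar\nabla^2 L_t(\nu_\alpha,\nu_\alpha)+\tfrac{1}{2(T-t)}\right).
\]

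The final step, where the curvature hypotheses on $N$ enter, is to bound this residual using Theorem \ref{LiYauHam} in the $k=0$ limit, applied to $L_t=\log\rho_{T-t}$ with ambient time parameter $T-t$: this yields $\bar\nabla^2 L_t\ge -\tfrac{1}{2(T-t)}I$ pointwise on $N$, so every summand is non-negative and the residual is $\le 0$. The main obstacle is really the algebraic bookkeeping in the tangential/normal decomposition and the completion of the square, keeping straight the signs produced by the moving frame and by the backward direction of time so that the cross terms assemble into a single squared norm and the residual appears in the exact form needed to invoke the matrix Li-Yau estimate; once this is arranged, Theorem \ref{LiYauHam} closes the argument essentially for free, and the monotonicity of $F$ is an immediate consequence.
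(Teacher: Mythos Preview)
Your proposal is correct and follows essentially the same route as the paper. The paper does not prove this classical statement separately; it is recovered from the proof of Theorem~\ref{genHui} by setting $U\equiv 0$, and that proof proceeds exactly as you describe: differentiate the weighted integral, use the tangential/normal decomposition $\bar\Delta u=\Delta^{M_t}u+\Delta_t^\perp u-\langle\meanvec_t,\nabla_t^\perp u\rangle$, integrate away the tangential Laplacian, pass to $\log u_t$, complete the square to produce $|\nabla_t^\perp\log u_t-\meanvec_t|^2$, and absorb the residual normal Hessian via Theorem~\ref{LiYauHam}. The only cosmetic difference is that the paper packages the first-variation step through the parallel adapted frame machinery of Lemmas~\ref{frame}--\ref{paflem2} (computing $\dot A(t)$ and $\tr(A^{-1}\dot A)$) rather than invoking the first variation formula directly, but the resulting identity and the subsequent steps are identical to yours.
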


There is an analogue of this monotonicity formula in the setting of Theorem \ref{newLiYauHam}. In this case, the evolving hypersurfaces $M_t$ satisfy the following equation instead
\begin{equation}\label{newmeanflow}
\dot\varphi_t=\meanvec_t(\varphi_t)+\nabla_t^\perp U.
\end{equation}

We remark that the term $\meanvec_t(\varphi_t)+\nabla_t^\perp U$ is a generalization of mean curvature first appeared in \cite{Gro}. In particular, the equation (\ref{newmeanflow}) is the gradient flow of the weighted volume functional
\[
\int_{\varphi(M)} e^{-U} d\nu,
\]
where $\nu$ is the Riemannian volume on $\varphi(M)$ induced by the one on $N$.

Special cases of the equation was also studied in \cite{ScSm} and \cite{BoMi}.

\begin{thm}\label{newHuisken}(Generalized Huisken's monotonicity formula)
Assume that the sectional curvature of the underlying compact Riemannian manifold $N$ is non-negative and the Ricci curvature is parallel. Let $U:M\to\Real$ be a smooth function satisfying the following condition for some positive constant $k$:
\[
\bar \nabla^2\left(-\Delta U+\frac{1}{2}|\nabla U|^2\right)\leq k^2I,
\]
where $\bar\nabla^2$ is the Hessian operator on $N$.
Let $\varphi_t$ be a solution of (\ref{newmeanflow}) and let $\rho_t$ be a positive solution of the equation
\[
\dot\rho_t=\bar\Delta \rho_t+\left<\bar\nabla U,\bar\nabla \rho_t\right>+\rho_t\bar\Delta U
\]
on $N$. Then
\[
\begin{split}
&\frac{d}{dt}\left(\sinh^{\frac{n-m}{2}}(k(T-t))\int_{\varphi_t(M)}\rho_{T-t}\,d\mu_t\right)\\
&\leq -\sinh^{\frac{n-m}{2}}(k(T-t))\int_{\varphi_t(M)}\rho_{T-t}\,\left(\frac{1}{2}\Delta_t^\perp U+\left|\frac{\bar\nabla^\perp u_t}{u_t}-\meanvec_t\right|^2\right)d\mu_t,
\end{split}
\]
where $\Delta_t^\perp U$ is defined by $\Delta_t^\perp U=\sum_k\left<\bar\nabla_{\bn_k(t)}U,\bn_k(t)\right>$.
\end{thm}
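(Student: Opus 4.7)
\medskip

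\noindent\textbf{Proof proposal.} The plan is to differentiate $F(t) := \int_{M_t}\rho_{T-t}\,d\mu_t$ directly, rewrite everything in terms of tangential and normal data on $M_t$, integrate by parts on the closed submanifold $M_t$, and then apply the generalized Hamilton--Li--Yau estimate (Theorem \ref{newLiYauHam}) traced over a normal frame. Note that the evolution equation for $\rho_t$ corresponds exactly to the choice $U_1=U$, $U_2=\bar\Delta U$ in Theorem \ref{newLiYauHam}, and the hypothesis then reads
\[
\bar\nabla^2\!\left(\bar\Delta U_1+\tfrac12|\bar\nabla U_1|^2-2U_2\right)=\bar\nabla^2\!\left(-\bar\Delta U+\tfrac12|\bar\nabla U|^2\right)\leq k^2 I,
\]
which is precisely the standing assumption. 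So Theorem \ref{newLiYauHam} is available.

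Writing $u:=\rho_{T-t}$ and using $\dot\varphi_t=\meanvec_t+\nabla_t^\perp U$, the chain rule gives
\[
\tfrac{d}{dt}u\circ\varphi_t=-\bar\Delta u-\langle\bar\nabla U,\bar\nabla u\rangle-u\bar\Delta U+\langle\bar\nabla u,\meanvec_t+\nabla_t^\perp U\rangle,
\]
while the first variation of area (with $\dot\varphi_t$ purely normal) yields $\tfrac{d}{dt}d\mu_t=-(|\meanvec_t|^2+\langle\meanvec_t,\nabla_t^\perp U\rangle)d\mu_t$. I would then apply the standard decomposition $\bar\Delta f|_{M_t}=\Delta^{M_t}\!f-\langle\meanvec_t,\bar\nabla f\rangle+\Delta_t^\perp f$ to both $u$ and $U$, together with $\langle\bar\nabla U,\bar\nabla u\rangle=\langle\nabla^{M_t}U,\nabla^{M_t}u\rangle+\langle\nabla_t^\perp U,\nabla_t^\perp u\rangle$. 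Integration by parts on the closed $M_t$ kills $\int\Delta^{M_t}u\,d\mu_t$ and turns $-\int u\Delta^{M_t}U\,d\mu_t$ into $\int\langle\nabla^{M_t}u,\nabla^{M_t}U\rangle d\mu_t$, which then cancels exactly the tangential piece of $-\langle\bar\nabla U,\bar\nabla u\rangle$. After collecting surviving terms, I expect
\[
F'(t)=\int_{M_t}\Bigl[2\langle\meanvec_t,\nabla_t^\perp u\rangle-\Delta_t^\perp u-u|\meanvec_t|^2-u\Delta_t^\perp U\Bigr]d\mu_t.
\]

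Now apply Theorem \ref{newLiYauHam} pointwise to $u$ and trace over an orthonormal normal frame $\{\bn_k\}$: using $\bar\nabla^2\log u=\bar\nabla^2 u/u-\bar\nabla u\otimes\bar\nabla u/u^2$, tracing gives $\Delta_t^\perp\log u+\tfrac12\Delta_t^\perp U\geq-\tfrac{(n-m)k\coth(k(T-t))}{2}$, equivalently
\[
\Delta_t^\perp u-\frac{|\nabla_t^\perp u|^2}{u}+\frac{u}{2}\Delta_t^\perp U+\frac{(n-m)k\coth(k(T-t))}{2}\,u\geq 0.
\]
Combining this pointwise inequality with the algebraic identity $u\bigl|\tfrac{\nabla_t^\perp u}{u}-\meanvec_t\bigr|^2=\tfrac{|\nabla_t^\perp u|^2}{u}-2\langle\nabla_t^\perp u,\meanvec_t\rangle+u|\meanvec_t|^2$ and the product-rule identity
\[
\tfrac{d}{dt}\bigl[\sinh^{(n-m)/2}(k(T-t))F(t)\bigr]=\sinh^{(n-m)/2}(k(T-t))\Bigl[F'(t)-\tfrac{(n-m)k\coth(k(T-t))}{2}F(t)\Bigr]
\]
yields the stated monotonicity after a few lines of arithmetic.

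The main obstacle is purely the tangential/normal bookkeeping: one has to verify that all tangential gradient and Laplacian contributions cancel after the two integrations by parts, and that the coefficient $-1$ of $u\Delta_t^\perp U$ produced by the evolution of the area and the term $\rho_t\bar\Delta U$ in the equation matches, after subtracting the $-\tfrac12 u\Delta_t^\perp U$ built into the target RHS, exactly the $\tfrac12\Delta_t^\perp U$ appearing in the traced Li--Yau estimate. This matching is forced by the choice $U_2=\bar\Delta U$, which makes the combination $\bar\Delta U_1+\tfrac12|\bar\nabla U_1|^2-2U_2$ depend on $U$ in exactly the right way for Theorem \ref{newLiYauHam} to close the estimate.
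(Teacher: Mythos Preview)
Your proposal is correct and follows essentially the same route as the paper. The only cosmetic difference is that the paper packages the first-variation step through its moving-frame machinery (Lemmas \ref{frame} and \ref{paflem2}, writing $\rho_t=\det A(t)$ and $\dot A=-\sum_k F_k A G^k$) rather than quoting the standard first-variation formula directly; after that, both arguments perform the identical tangential/normal decomposition of $\bar\Delta$, the same integration by parts on the closed $M_t$, arrive at the same expression $F'(t)=\int_{M_t}\bigl[2\langle\meanvec_t,\nabla_t^\perp u\rangle-\Delta_t^\perp u-u|\meanvec_t|^2-u\Delta_t^\perp U\bigr]d\mu_t$, and close with the normal trace of Theorem \ref{newLiYauHam} combined with $\dot b_{k_3}=a_{k_3}b_{k_3}$.
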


As an immediate consequence, we have

\begin{cor}\label{newHuiskenCor}\label{lastcor}
Assume that the sectional curvature of the underlying compact Riemannian manifold $N$ is non-negative and the Ricci curvature is parallel. Let $U:M\to\Real$ be a smooth function satisfying the following condition for some constants $K$ and $k$ with $k>0$:
\[
\bar \nabla^2\left(-\Delta U+\frac{1}{2}|\nabla U|^2\right)\leq k^2I \quad \text{ and } \quad \bar\nabla^2U\geq KI.
\]
Let $\varphi_t$ be a solution of (\ref{newmeanflow}) and let $\rho_t$ be a positive solution of the equation
\[
\dot\rho_t=\bar\Delta \rho_t+\left<\bar\nabla U,\bar\nabla \rho_t\right>+\rho_t\bar\Delta U
\]
on $N$. Then
\[
\begin{split}
&\frac{d}{dt}\left(e^{-\frac{K(n-m)(T-t)}{2}}\sinh^{\frac{n-m}{2}}(k(T-t))\int_{\varphi_t(M)}\rho_{T-t}\,d\mu_t\right)\\
&\leq -e^{-\frac{K(n-m)(T-t)}{2}}\sinh^{\frac{n-m}{2}}(k(T-t))\int_{\varphi_t(M)}\rho_{T-t}\,\left(\left|\frac{\nabla_t \perp u_t}{u_t}-\meanvec_t\right|^2\right)d\mu_t.
\end{split}
\]
In particular, $e^{-\frac{K(n-m)(T-t)}{2}}\sinh^{\frac{n-m}{2}}(k(T-t))\int_{\varphi_t(M)}\rho_{T-t}\,d\mu_t$ is monotone.
\end{cor}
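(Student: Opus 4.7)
The plan is to deduce this corollary directly from Theorem \ref{newHuisken} by using the lower Hessian bound $\bar\nabla^2 U\geq KI$ to absorb the $\tfrac12 \Delta_t^\perp U$ term into an integrating factor, leaving only the manifestly nonpositive squared term on the right.

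The first step is to reinterpret $\Delta_t^\perp U$ as a partial trace of the Hessian. If $\bn_1(t),\dots,\bn_{n-m}(t)$ is a local orthonormal frame for the normal bundle of $M_t$, then $\langle \bar\nabla_{\bn_k(t)}U,\bn_k(t)\rangle = \bar\nabla^2 U(\bn_k(t),\bn_k(t))$, so $\Delta_t^\perp U$ is the trace of $\bar\nabla^2 U$ restricted to the normal bundle. Under the assumption $\bar\nabla^2 U\geq KI$, each diagonal entry is at least $K$, and summing over the $n-m$ normal directions yields the pointwise bound
\[
\tfrac12\Delta_t^\perp U \;\geq\; \tfrac{K(n-m)}{2}.
\]

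Next, set $\Phi(t):=\sinh^{\frac{n-m}{2}}(k(T-t))\int_{\varphi_t(M)}\rho_{T-t}\,d\mu_t$ and denote by $\mathcal{I}(t)$ the nonnegative quantity $\sinh^{\frac{n-m}{2}}(k(T-t))\int_{\varphi_t(M)}\rho_{T-t}\,\bigl|\tfrac{\bar\nabla^\perp u_t}{u_t}-\meanvec_t\bigr|^2 d\mu_t$. Substituting the pointwise bound above into the conclusion of Theorem \ref{newHuisken} gives
\[
\Phi'(t) \;\leq\; -\tfrac{K(n-m)}{2}\,\Phi(t)\;-\;\mathcal{I}(t).
\]

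The final step is the integrating-factor computation. Since $\tfrac{d}{dt} e^{-\frac{K(n-m)(T-t)}{2}} = \tfrac{K(n-m)}{2}\,e^{-\frac{K(n-m)(T-t)}{2}}$, we have
\[
\tfrac{d}{dt}\Bigl(e^{-\frac{K(n-m)(T-t)}{2}}\Phi(t)\Bigr) \;=\; e^{-\frac{K(n-m)(T-t)}{2}}\Bigl(\tfrac{K(n-m)}{2}\Phi(t)+\Phi'(t)\Bigr) \;\leq\; -e^{-\frac{K(n-m)(T-t)}{2}}\,\mathcal{I}(t),
\]
which is exactly the inequality claimed in the corollary; monotonicity follows because the right-hand side is nonpositive. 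Essentially no obstacle is expected, since the argument is just a one-line ODE absorption once the trace lower bound on $\Delta_t^\perp U$ is in hand; the only thing to double-check is the sign of the derivative of the exponential, which comes out correctly because $T-t$ decreases with $t$.
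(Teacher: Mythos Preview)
Your argument is correct and is exactly the ``immediate consequence'' the paper has in mind: the paper does not write out a separate proof but simply states the corollary as following directly from Theorem~\ref{newHuisken}, and your two steps---using $\bar\nabla^2 U\geq KI$ to bound the normal-trace $\tfrac12\Delta_t^\perp U\geq \tfrac{K(n-m)}{2}$ pointwise, then absorbing this constant via the integrating factor $e^{-\frac{K(n-m)(T-t)}{2}}$---are precisely what is needed. The sign check on the derivative of the exponential is right, and the identification of $\Delta_t^\perp U$ with the partial Hessian trace over the normal bundle matches the definition used in the proof of Theorem~\ref{genHui}.
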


Remarkably, Corollary \ref{lastcor} is also sharp. In this case, we set $M=\Real^n$, $U=-\frac{k}{2}|x|^2$, and $K=-k$. Then
\[
\rho_t(x)=\left(\frac{2\pi(\exp(2tk)-1)}{k\exp(2tk)}\right)^{-n/2}\exp\left(\frac{-k|x|^2}{2(\exp(2tk)-1)}\right)\exp(knt)
\]
is a solution of the equation
\[
\dot\rho_t=\bar\Delta \rho_t+\left<\bar\nabla U,\bar\nabla \rho_t\right>+\rho_t\bar\Delta U=\bar\Delta \rho_t-k\left<x,\bar\nabla \rho_t\right>-kn\rho_t.
\]
It follows from the proof of Corollary \ref{lastcor} that all inequalities in the corollary are equalities in this case.

Assuming that the underlying manifold $M$ is compact, Theorem \ref{main} can be proved using the Bochner formula and the maximum principle. However, instead of the Bochner formula, we will prove a general result (Theorem \ref{main1} and \ref{main2}) using a moving frame argument motivated by the theory of optimal transportation (see \cite{Vi1}). This allows a more unified treatment for Theorem \ref{main} and \ref{newLiYauHam} under the compactness assumption. In section 3 and 4, we show that the above generalization of the Li-Yau estimate and its matrix analogue are simple consequences of Theorem \ref{main1} and \ref{main2}. In section 5, we give the proof of the generalized Huisken's monotonicity formula.

The Aronzon-B\'enilan estimate is a differential Harnack inequality for the porous medium equation
\[
\dot\rho_t=\Delta(\rho_t^m).
\]
In section 5, we will prove a generalization of Aronzon-B\'enilan estimate using Theorem \ref{main1} and \ref{main2}. We will prove sharp Laplace and Hessian type comparison theorems for the cost function (\ref{newcost}) in section 6. In section 7, a semigroup proof, in the spirit of \cite{BaLe}, of the generalized Li-Yau estimates will be discussed (again assuming $M$ is compact). In section 8, we give a proof of Theorem \ref{main} without any compactness assumption.

\smallskip

\section{Preliminaries}

In this section, we state and prove general results which will be used in the next few sections. For this, we will introduce some notations. Let $M$ be a $n$-dimensional compact manifold without boundary equipped with a Riemannian metric denoted by $\left<\cdot,\cdot\right>$ or $g$. The corresponding Riemann curvature tensor is denoted by $\Rm$. Let $F$ be a function on the space of all $n\times n$ matrices. We assume that $F$ is invariant under orthogonal changes of variables (i.e. $F(O^TAO)=F(A)$ for each orthogonal matrix $O$). For each linear map $W:T_xM\to T_xM$ of the tangent space $T_xM$ at a point $x$, we set $F(W)=F(\mathcal W)$, where $\mathcal W$ is the matrix with $ij$-th entry equal to $\left<W(v_i),v_j\right>$ and $\{v_1,...,v_n\}$ is an orthonormal frame at $x$. This is well-defined since $F$ is invariant under orthogonal changes of variables. Note that this condition is not needed or can be relaxed when the tangent bundle $TM$ of $M$ is parallelizable. For instance, when the manifold is the flat torus, this condition can be completely removed. Finally, if $u$, $v$, and $w$ are tangent vectors, then $u\otimes v$ denotes the linear map defined by $u\otimes v(w)=\left<v,w\right> u$.

The following is a generalization of the Li-Yau estimate \cite{LiYa}.

\begin{thm}\label{main1}
Assume that there is a non-negative function $b_t:M\to\Real$, a time dependent vector field $Y_t$ on a compact manifold $M$, and a fibre-preserving bundle homomorphism $W_t:TM\to TM$ of the tangent bundle $TM$ such that
\begin{enumerate}
\item $F'(A)(B^2)\geq k_1F(B)^2$ for some non-negative constant $k_1$,
\item $F'(\nabla X_t)(W_t+\Rm(\cdot,X_t)X_t)\geq k_3$ for some constant $k_3$,
\item $F'(\nabla X_t)(\nabla(\dot X_t+\nabla_{X_t} X_t)+W_t)+k_2F(\nabla X_t)^2\\
\leq F'(\nabla X_t)(b_t\nabla^2 (F(\nabla X_t))+\nabla (F(\nabla X_t))\otimes Y_t)$,
\item $k_1+k_2> 0$,
\end{enumerate}
Then
\[
F(\nabla X_t)\leq \frac{1}{k_1+k_2} a_{(k_1+k_2)k_3}(t),
\]
where
\[
a_K(t)= \begin{cases}
\sqrt{K}\cot(\sqrt{K}\,t) & \mbox{if $K>0$}\\
\frac{1}{t} & \mbox{if $K= 0$}\\
\sqrt{-K}\coth(\sqrt{-K}\,t) & \mbox{if $K<0$}.
\end{cases}
\]
\end{thm}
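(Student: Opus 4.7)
The plan is to translate the three hypotheses into a parabolic differential inequality for the scalar $\phi_t := F(\nabla X_t)$ and then deduce the bound by ODE comparison, all on the compact manifold $M$ where the maximum principle is available. I first compute the ``material derivative'' $(\partial_t + \nabla_{X_t})\phi_t$, which by the chain rule equals $F'(\nabla X_t)(\nabla \dot X_t + \nabla_{X_t}(\nabla X_t))$. The standard Ricci commutation identity rewrites
\[
\nabla_{X_t}(\nabla X_t) = \nabla(\nabla_{X_t} X_t) - (\nabla X_t)^2 - \Rm(\cdot, X_t) X_t,
\]
and adding and subtracting $W_t$ inside the linear functional $F'(\nabla X_t)$ splits the right-hand side into exactly three pieces, matching hypotheses (3), (2), and (1):
\[
F'(\nabla X_t)\bigl(\nabla(\dot X_t + \nabla_{X_t} X_t) + W_t\bigr) - F'(\nabla X_t)\bigl(W_t + \Rm(\cdot, X_t) X_t\bigr) - F'(\nabla X_t)\bigl((\nabla X_t)^2\bigr).
\]

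Applying the hypotheses termwise yields the parabolic inequality. Hypothesis (3) bounds the first piece above by $F'(\nabla X_t)(b_t\nabla^2\phi_t + \nabla\phi_t \otimes Y_t) - k_2\phi_t^2$; hypothesis (2) bounds the second piece below by $k_3$, so its negative is $\leq -k_3$; and hypothesis (1) with $B = \nabla X_t$ gives $F'(\nabla X_t)((\nabla X_t)^2) \geq k_1\phi_t^2$. Summing,
\[
(\partial_t + \nabla_{X_t})\phi_t \leq F'(\nabla X_t)\bigl(b_t\,\nabla^2\phi_t + \nabla\phi_t \otimes Y_t\bigr) - (k_1+k_2)\phi_t^2 - k_3.
\]

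For the final step, set $y(t) := \tfrac{1}{k_1+k_2}\,a_{(k_1+k_2)k_3}(t)$; a direct computation shows $\dot y = -(k_1+k_2)y^2 - k_3$ with $y(t) \to +\infty$ as $t \to 0^+$. Hypothesis (1) applied to $B = S^{1/2}$ forces $F'(\nabla X_t)$ to be positive semidefinite as a linear functional on symmetric matrices, so at a spatial maximum of $\phi_t$ (where $\nabla\phi_t = 0$ and $\nabla^2\phi_t \leq 0$) the entire $F'$-term on the right is nonpositive, and $\nabla_{X_t}\phi_t = 0$. Hence $\phi^*(t) := \max_{x \in M}\phi_t(x)$ satisfies $\dot\phi^*(t) \leq -(k_1+k_2)\phi^*(t)^2 - k_3$ in the viscosity sense, and standard scalar ODE comparison together with $y(0^+) = +\infty$ yields $\phi^*(t) \leq y(t)$ throughout the domain of $y$. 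The main obstacle is Step 1: tracking the signs in the Ricci commutation identity so that the algebraic decomposition lines up exactly with the three hypotheses as written. A secondary subtlety is that $y(t)$ can change sign when $(k_1+k_2)k_3 > 0$, so the comparison must be phrased directly as $\phi_t \leq y(t)$, and the blow-up $y(0^+) = +\infty$ absorbs the initial data for free.
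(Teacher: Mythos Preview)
Your argument is correct and is, at bottom, the same proof as the paper's: both derive the scalar Riccati-type inequality for $\phi_t=F(\nabla X_t)$ along the flow of $X_t$ and then finish with a maximum-principle/ODE comparison. Your ``material derivative plus Ricci commutation'' computation is exactly the Eulerian transcription of the paper's Lagrangian moving-frame computation: the paper pushes an orthonormal frame along $\varphi_t$, sets $S(t)=A(t)^{-1}\dot A(t)$ with $S_{ij}=\langle\nabla_{v_i}X_t,v_j\rangle$, and derives $\dot S+S^2+R=M$; your identity $\nabla_{X_t}(\nabla X_t)=\nabla(\nabla_{X_t}X_t)-(\nabla X_t)^2-\Rm(\cdot,X_t)X_t$ is literally this matrix Riccati equation written intrinsically, and $(\partial_t+\nabla_{X_t})\phi_t=\tfrac{d}{dt}F(S(t))$. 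For the endgame the paper uses a first-touching-time argument (choose $t_0$ to be the first time $F(\nabla X_{t_0})=k\,a_K(t_0)$, then let $K\uparrow(k_1+k_2)k_3$), while you invoke scalar ODE comparison for $\phi^*(t)=\max_M\phi_t$ against $y(t)$; these are interchangeable. Your explicit justification that $F'(\nabla X_t)(b_t\nabla^2\phi_t)\le 0$ at a spatial maximum---via hypothesis~(1) with $B=(-b_t\nabla^2\phi_t)^{1/2}$ and $k_1\ge 0$---is a point the paper uses tacitly in passing to \eqref{m5}, so it is good that you spelled it out.
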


\begin{rem}
Note that the above theorem can be further generalized to include situation considered in \cite{BaLe} if $F$ is allowed to depend on $X_t$, not just $\nabla X_t$. However, we will not pursue this here.
\end{rem}

A matrix version of Li-Yau estimate was done by Hamilton \cite{Ha1}. The following is the corresponding matrix version of Theorem \ref{main1}.

\begin{thm}\label{main2}
Assume that there is a non-negative function $b_t:M\to\Real$, a time dependent vector field $Y_t$ on a compact manifold $M$, and a fibre-preserving bundle homomorphism $W_t:TM\to TM$ of the tangent bundle $TM$ such that
\begin{enumerate}
\item $w\mapsto \left<X_t,w\right>$ is a closed 1-form,
\item $W_t+\Rm(\cdot,X_t)X_t\geq k_3I$ for some constant $k_3$,
\item $\left<\nabla_v(\dot X_t+\nabla X_t(X_t)),v\right>+k_2\left<\nabla X_t(\nabla X_t(v)),v\right>+\left<W_tv,v\right>\\
\leq b_t\left<\Delta\nabla X_t(v),v\right>+\left<\nabla_{Y_t} \nabla_v X_t,v\right>$ for each eigenvector of the linear map $w\mapsto \nabla_w X_t$ with the largest eigenvalue,
\item $1+k_2> 0$,
\end{enumerate}
Then
\[
\nabla X_t\leq \frac{1}{1+k_2} a_{(1+k_2)k_3}(t)I.
\]
\end{thm}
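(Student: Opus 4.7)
The plan is to mirror the proof of Theorem \ref{main1} in the matrix setting by applying Hamilton's tensor maximum principle to the largest eigenvalue
\[
\lambda(x,t):=\max\bigl\{\langle\nabla_v X_t,v\rangle:v\in T_xM,\ |v|=1\bigr\}
\]
of the endomorphism $\nabla X_t(x)$, which is symmetric thanks to hypothesis (1) (a closed dual $1$-form). The natural comparison barrier is $\phi(t):=\frac{1}{1+k_2}a_{(1+k_2)k_3}(t)$, which satisfies the Riccati equation
\[
\dot\phi=-(1+k_2)\phi^2-k_3,\qquad\lim_{t\to 0^+}\phi(t)=+\infty.
\]
Compactness of $M$ together with the blow-up of $\phi$ at $t=0$ gives $\lambda(\cdot,\delta)<\phi(\delta)$ on all of $M$ for every sufficiently small $\delta>0$. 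The aim is to show that at any first touching point $(x_0,t_0)$, meaning $\lambda(x_0,t_0)=\phi(t_0)$ with $\lambda(\cdot,t)<\phi(t)$ for $t<t_0$, one derives a contradiction; to turn the touching into strict incompatibility I would instead run the argument against a perturbed barrier $\phi_\epsilon$ solving $\dot\phi_\epsilon=-(1+k_2)\phi_\epsilon^2-k_3+\epsilon$ and then let $\epsilon,\delta\to 0$.

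At the touching point choose a unit top eigenvector $v_0$ of $\nabla X_{t_0}(x_0)$ and extend it to a smooth unit vector field $\tilde v$ by parallel transport along radial geodesics from $x_0$, so that $\nabla_w\tilde v(x_0)=0$ and $(\nabla_w\nabla_w\tilde v)(x_0)=0$ for every $w\in T_{x_0}M$. The smooth comparison function
\[
\tilde\lambda(x,t):=\langle\nabla X_t(\tilde v(x)),\tilde v(x)\rangle
\]
satisfies $\tilde\lambda\le\lambda$ everywhere with equality at $(x_0,t_0)$, so $\tilde\lambda-\phi$ attains its maximum $0$ at $(x_0,t_0)$, yielding the standard first- and second-order conditions
\[
\partial_t\tilde\lambda(x_0,t_0)\ge\dot\phi(t_0),\quad\nabla\tilde\lambda(x_0,t_0)=0,\quad\Delta\tilde\lambda(x_0,t_0)\le 0.
\]

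The computational core is to recognise that hypothesis (3), evaluated at $v_0$, collapses exactly into this pattern. Using $\nabla\tilde v(x_0)=0$ one verifies that
\[
\langle\nabla_{v_0}\dot X_t,v_0\rangle=\partial_t\tilde\lambda,\qquad\langle\Delta\nabla X_t(v_0),v_0\rangle=\Delta\tilde\lambda,
\]
\[
\langle\nabla_{X_t}\nabla_{v_0}X_t,v_0\rangle=\nabla_{X_t}\tilde\lambda=0,\qquad\langle\nabla_{Y_t}\nabla_{v_0}X_t,v_0\rangle=\nabla_{Y_t}\tilde\lambda=0,
\]
all at $(x_0,t_0)$. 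Applying the Ricci identity to $\nabla_{\tilde v}\nabla_{X_t}X_t$, and using $[\tilde v,X_t](x_0)=\nabla_{v_0}X_t=\lambda v_0$ together with $\nabla X_t(\lambda v_0)=\lambda^2 v_0$, gives
\[
\langle\nabla_{v_0}(\dot X_t+\nabla_{X_t}X_t),v_0\rangle=\partial_t\tilde\lambda+\langle\Rm(v_0,X_t)X_t,v_0\rangle+\lambda^2.
\]
Since $v_0$ is a top eigenvector, also $\langle(\nabla X_t)(\nabla X_t(v_0)),v_0\rangle=\lambda^2$. Substituting into (3), using hypothesis (2) to bound $\langle W_t v_0+\Rm(v_0,X_t)X_t,v_0\rangle\ge k_3$ and $b_t\Delta\tilde\lambda\le 0$, collapses everything to
\[
\partial_t\tilde\lambda(x_0,t_0)+(1+k_2)\phi(t_0)^2+k_3\le 0,
\]
which contradicts $\partial_t\tilde\lambda\ge\dot\phi_\epsilon=-(1+k_2)\phi_\epsilon^2-k_3+\epsilon$. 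Sending $\epsilon,\delta\to 0$ then yields the stated bound.

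The main obstacle I anticipate is the careful covariant-derivative bookkeeping in the third paragraph: one has to apply the Ricci identity to $\nabla_{\tilde v}\nabla_{X_t}X_t$ (viewing $\tilde v$ as a vector field rather than just the fibre vector $v_0$), then recognise that the bracket term $\nabla_{[\tilde v,X_t]}X_t|_{x_0}=\lambda^2 v_0$ crucially uses the top-eigenvector property, which is exactly why hypothesis (3) is assumed only on such eigenvectors. All other seemingly dangerous terms either vanish because $\nabla\tilde\lambda(x_0)=0$ (this absorbs the first-order terms involving $X_t$ and $Y_t$) or carry the right sign because $\Delta\tilde\lambda(x_0)\le 0$ and $b_t\ge 0$.
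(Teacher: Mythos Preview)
Your argument is correct and follows the same overall strategy as the paper: a tensor maximum principle applied to the top eigenvalue of the symmetric endomorphism $\nabla X_t$, with the barrier $\frac{1}{1+k_2}a_{(1+k_2)k_3}(t)$, the eigenvector extended by parallel transport along radial geodesics, and assumption (3) combined with the first- and second-order conditions at the first touching point to kill the $b_t\Delta$- and $\nabla_{Y_t}$-terms.

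The one genuine methodological difference is in how the Riccati-type scalar inequality at the touching point is produced. The paper recycles the moving-frame apparatus from Theorem~\ref{main1}: it introduces the flow $\varphi_t$ of $X_t$, the matrix $S(t)=A(t)^{-1}\dot A(t)$, and derives the matrix Riccati equation $\dot S+S^2+R=M$ along each integral curve; evaluating $\langle(\dot S+S^2+R)v,v\rangle$ at the touching time and bounding $\langle Mv,v\rangle$ via assumption (3) and the spatial maximum then gives the contradiction. You bypass the flow and the Riccati ODE entirely, instead expanding $\langle\nabla_{v_0}(\dot X_t+\nabla_{X_t}X_t),v_0\rangle$ directly at $(x_0,t_0)$ with the curvature identity $\nabla_{\tilde v}\nabla_{X_t}X_t=\nabla_{X_t}\nabla_{\tilde v}X_t+\nabla_{[\tilde v,X_t]}X_t+\Rm(\tilde v,X_t)X_t$, and then use $\nabla\tilde\lambda(x_0)=0$ to drop the $\nabla_{X_t}\tilde\lambda$ term. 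The two computations are equivalent (your $\partial_t\tilde\lambda+\nabla_{X_t}\tilde\lambda$ is exactly the paper's total derivative $\frac{d}{dt}\langle S(t)v(t),v(t)\rangle$ along the flow), so nothing is lost; your version is more self-contained, while the paper's has the advantage of making the Riccati structure explicit and of unifying the proofs of Theorems~\ref{main1} and~\ref{main2} under a single moving-frame framework.
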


As a consequence, we obtain the following estimate on the volume growth of a set under the flow of the vector field $X_t$ if $F=\tr$.

\begin{cor}\label{maincor1}
Under the assumptions of Theorem \ref{main1} with $F=\tr$,
\[
(b_{(k_1+k_2)k_3}(t))^{-\frac{1}{k_1+k_2}}\vol\,(\varphi_t(D))
\]
is a decreasing function of time $t$, where
\[
b_K(t)= \begin{cases}
\frac{1}{\sqrt K}\sin(\sqrt Kt) & \mbox{if $K>0$}\\
t & \mbox{if $K= 0$}\\
\frac{1}{\sqrt{-K}}\sinh(\sqrt{-K}t) & \mbox{if $K<0$}.
\end{cases}
\]
\end{cor}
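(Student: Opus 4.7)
The plan is to reduce the monotonicity statement to a straightforward ODE comparison that uses the pointwise divergence bound supplied by Theorem \ref{main1} with $F=\tr$. First, I would observe that when $F=\tr$, the inequality in Theorem \ref{main1} reads
\[
\di X_t \;=\; \tr(\nabla X_t) \;\leq\; \frac{1}{k_1+k_2}\, a_{(k_1+k_2)k_3}(t).
\]
This is the only analytic input from the preceding theorem that I expect to need.

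Next, I would write $\varphi_t$ for the flow of the (time-dependent) vector field $X_t$ and recall the standard first-variation identity for volume,
\[
\frac{d}{dt}\vol\bigl(\varphi_t(D)\bigr) \;=\; \int_{\varphi_t(D)} \di X_t \, d\vol,
\]
which follows from the fact that the Jacobian of $\varphi_t$ evolves by $\di X_t$ along trajectories. Combined with the bound above, this yields
\[
\frac{d}{dt}\vol\bigl(\varphi_t(D)\bigr) \;\leq\; \frac{1}{k_1+k_2}\, a_{(k_1+k_2)k_3}(t) \,\vol\bigl(\varphi_t(D)\bigr).
\]

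The key elementary observation I would then record is that in all three cases of the definition $a_K(t) = b_K'(t)/b_K(t)$; this is immediate from the formulas ($\cos/\sin$, $1/t$, or $\cosh/\sinh$). Setting $K=(k_1+k_2)k_3$ and $\alpha=1/(k_1+k_2)$, differentiate the product:
\[
\frac{d}{dt}\Bigl(b_K(t)^{-\alpha}\vol(\varphi_t(D))\Bigr)
= b_K(t)^{-\alpha}\!\left[\frac{d}{dt}\vol(\varphi_t(D)) - \alpha\,\frac{b_K'(t)}{b_K(t)}\vol(\varphi_t(D))\right].
\]
Using $a_K = b_K'/b_K$ together with the divergence estimate, the bracket is non-positive and monotonicity follows.

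I do not anticipate any significant obstacle here; essentially all of the work is absorbed into Theorem \ref{main1}. The only points that require a bit of care are the sign conventions in the three cases of $K$ (positive, zero, negative), so that $b_K$ is positive on the relevant interval and the identity $a_K = b_K'/b_K$ is valid; and the justification of the first-variation formula for the evolving region $\varphi_t(D)$ (standard, since $M$ is compact and $X_t$ is smooth, so the flow is defined for the relevant times).
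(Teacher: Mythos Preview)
Your proposal is correct and follows essentially the same approach as the paper: both use the bound $\tr(\nabla X_t)\leq \frac{1}{k_1+k_2}a_{(k_1+k_2)k_3}(t)$ from Theorem~\ref{main1} together with the identity $a_K=\dot b_K/b_K$. The only cosmetic difference is that the paper works pointwise with the Jacobian, bounding $\det(d\varphi_{t_1})/\det(d\varphi_{t_0})$ via $\frac{d}{dt}\log\det A(t)=\tr(\nabla X_t)$ before integrating over $D$, whereas you go straight to the integrated first-variation formula for $\vol(\varphi_t(D))$.
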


The rest of this section is devoted to the proof of the above mentioned results.

\begin{proof}[Proof of Theorem \ref{main1}]
Let $\varphi_t$ be the one-parameter family of diffeomorphisms defined by the vector field $X_t$: $\dot\varphi_t=X_t(\varphi_t)$ and $\varphi_0(x)=x$. Let $\gamma(s)$ be a curve which start from $x$ with initial velocity $v$: $\gamma(0)=x$ and $\gamma'(0)=v$. Then
\[
\frac{D}{dt} d\varphi_t(v)=\frac{D}{ds}\frac{D}{dt}\varphi_t(\gamma(s))\Big|_{s=0}=\nabla_{d\varphi_t(v)}X_t.
\]

Let $v_1(0),...,v_n(0)$ be an orthonormal frame at a point $x$ and let $v_1(t),...,v_n(t)$ be the parallel transport of $v_1(0),...,v_n(0)$ along the path $\varphi_t(x)$. Let $A(t)$ be the matrix defined by
\[
d\varphi_t(v_i(0))=\sum_{j=1}^nA_{ij}(t)v_j(t).
\]

It follows that
\[
\sum_{j=1}^n\dot A_{ij}(t)v_j(t)=\sum_{j=1}^n A_{ij}(t)\nabla_{v_j(t)}X_t.
\]
Therefore, if $S_{ij}(t)=\left<\nabla_{v_i(t)}X_t,v_j(t)\right>$, then $S(t)=A(t)^{-1}\dot A(t)$ and we have
\begin{equation}\label{m1}
\begin{split}
\dot S(t)&=-A(t)^{-1}\dot A(t)A(t)^{-1}\dot A(t)+A(t)^{-1}\ddot A(t)\\
&=-S(t)^2+A(t)^{-1}\ddot A(t).
\end{split}
\end{equation}

On the other hand, if we differentiate the equation $\dot\varphi_t=X_t(\varphi_t)$, then we get
\[
\frac{D}{dt}\dot\varphi_t=\dot X_t(\varphi_t)+\nabla_{X_t}X_t(\varphi_t)
\]
and
\[
\frac{D}{ds}\frac{D}{dt} \dot\varphi_t(\gamma(s))\Big|_{s=0}=\nabla_{d\varphi_t(v)}\left(\dot X_t+\nabla_{X_t}X_t\right).
\]

By the definition of the Riemann curvature tensor $\Rm$, it follows that
\[
\frac{D^2}{dt^2} d\varphi_t(v)+\Rm(d\varphi_t(v),X_t(\varphi_t))X_t(\varphi_t)=\nabla_{d\varphi_t(v)}\left(\dot X_t+\nabla_{X_t}X_t\right).
\]

Therefore, by the definition of the matrix $A(t)$, the following holds
\[
\ddot A(t)+A(t)(R(t)-M(t))=0,
\]
where
\[
R_{ij}(t)=\left<\Rm(v_i(t),X_t(\varphi_t(x)))X_t(\varphi_t(x)),v_j(t)\right>
\]
and
\[
M_{ij}(t)=\left<\nabla_{v_i(t)}\left(\dot X_t+\nabla_{X_t}X_t\right),v_j(t)\right>_{\varphi_t(x)}.
\]

By combining this with (\ref{m1}), we obtain
\begin{equation}\label{Riccati}
\dot S(t)+S(t)^2+R(t)=M(t).
\end{equation}

It follows that
\begin{equation}\label{m2}
\begin{split}
\frac{d}{dt} F(S(t))+ F'(S(t))(S(t)^2+R(t))=F'(S(t))(M(t)).
\end{split}
\end{equation}

Let $t_0$ be the first time where $F(\nabla X_{t_0}(\varphi_{t_0}(x)))=k a_K(t_0)$ for some point $x$, where $k>0$. By assumption, we have
\begin{equation}\label{m5}
\begin{split}
&F'(\nabla X_{t_0})(\nabla(\dot X_{t_0}+\nabla_{X_{t_0}} X_{t_0})+W_t)+k_2F(\nabla X_{t_0})^2\leq 0
\end{split}
 \end{equation}
at $\varphi_{t_0}(x)$.

In the matrix notation, we have
\[
F'(S(t_0))(M(t_0)+\mathcal W(t_0))+k_2F(S(t_0))^2\leq 0,
\]
where $\mathcal W(t_0)$ be the matrix with $ij$-th entry equal to $\left<W_{t}(v_i(t)),v_j(t)\right>$.

By combining this with (\ref{m2}) and using the assumptions, we obtain
\[
\begin{split}
&\frac{d}{dt} F(S(t_0))+ k_1F(S(t_0))^2+ k_3\\
&\leq \frac{d}{dt} F(S(t_0))+ k_1F(S(t_0))^2+ F'(S(t_0))(R(t_0)+\mathcal W(t_0))\\
&\leq \frac{d}{dt} F(S(t_0))+ F'(S(t_0)^2)+ F'(S(t_0))(R(t_0)+\mathcal W(t_0))\\
&= F'(S(t_0))(M(t_0)+\mathcal W(t_0))\\
&\leq -k_2F(S(t_0))^2.
\end{split}
\]

By the definition of $t_0$, we have $k a_K(t_0)= F(S(t_0))$ and $k\dot a_K(t_0)\leq\frac{d}{dt} F(S(t_0))$. Therefore, the above inequality becomes
\[
k\dot a_K(t_0)+ (k_1+k_2)k^2a_K(t_0)^2+ k_3\leq 0.
\]

Since $a_K$ satisfies
\begin{equation}\label{Riccati0}
\dot a_K+a_K^2+K=0,
\end{equation}
it follows that
\[
k((k_1+k_2)k-1)a(t_0)^2+ k_3-kK\leq 0.
\]

Therefore, we obtain a contradiction if $k=\frac{1}{k_1+k_2}$ and $K < (k_1+k_2)k_3$. Hence
\[
F(\nabla X_t)<\frac{1}{k_1+k_2} a_K(t)
\]
for all $K<(k_1+k_2)k_3$. By letting $K\to (k_1+k_2)k_3$, we obtain
\[
F(\nabla X_t)\leq \frac{1}{k_1+k_2} a_{(k_1+k_2)k_3}(t).
\]
\end{proof}

\begin{proof}[Proof of Theorem \ref{main2}]
Here, we use the same notations as in the proof of Theorem \ref{main1}. By assumption the one-form $v\mapsto \left<X_t,v\right>$ is closed. This is equivalent to $\left<\nabla_vX_t,w\right>=\left<v,\nabla_wX_t\right>$. It follows that the matrices $S(t)$ are all symmetric. Let $t_0$ be the first time such that there is a point $x$ and a unit tangent vector $v$ in the tangent space $T_{\varphi_t(x)}M$ at $\varphi_t(x)$ such that $\left<\nabla_v X_{t_0},v\right>=\left<S(t_0)v,v\right>=ka_K(t_0)$. Here $v$ denotes both the vector $v$ and its matrix representation with respect to the orthonormal frame $v_1(t),...,v_n(t)$. In particular, $ka_K(t_0)$ is the largest eigenvalue of $S(t_0)$ with eigenvector $v$. By parallel translating along geodesics, we extend $v$ to a vector field still denoted by $v$. It follows that $\nabla v=0$ and $\Delta v=0$. Therefore, the following holds by assumption
\[
\begin{split}
&\left<\nabla_v(\dot X_{t_0}+\nabla X_{t_0}(X_{t_0})),v\right>+k_2\left<\nabla X_{t_0}(\nabla X_{t_0}(v)),v\right>+\left<W_{t_0}v,v\right>\\
&\leq b_{t_0}\left<\Delta\nabla X_{t_0}(v),v\right>+\left<\nabla_{Y_{t_0}} \nabla_v X_{t_0},v\right>\\
&\leq b_{t_0}\Delta\left<\nabla X_{t_0}(v),v\right>+\nabla_{Y_{t_0}} \left<\nabla_v X_{t_0},v\right>\leq 0.
\end{split}
 \]

In terms of the matrix notations, the above inequality becomes
\[
\left<(M(t_0)+k_2S(t_0)^2+\mathcal W(t_0))v,v\right>\leq 0.
\]

This, together with (\ref{Riccati}) and (\ref{Riccati0}), gives
\[
\begin{split}
0&\leq \frac{d}{dt} \left(\left<S(t)v,v\right>-ka_K(t)\right)\Big|_{t=t_0}\\
&=-\left<S(t_0)^2v,v\right>+\left<(M(t_0)-R(t_0))v,v\right>+ka_K(t_0)^2+kK\\
&\leq -(1+k_2)\left<S(t_0)^2v,v\right>-\left<(\mathcal W(t_0)+R(t_0))v,v\right>+ka_K(t_0)^2+kK.
\end{split}
\]

By assumption, $\mathcal W(t)+R(t)\geq k_3I$. It follows that
\[
k(1-(1+k_2)k)a_K(t_0)^2+kK\geq k_3.
\]

Therefore, we obtain a contradiction if $k=\frac{1}{1+k_2}$ and $K<k_3(1+k_2)$. It follows that
\[
\nabla X_t\leq \frac{a_{k_3(1+k_2)}(t)}{1+k_2}I.
\]
\end{proof}

\begin{proof}[Proof of Corollary \ref{maincor1}]
If $F(\nabla X)\geq \tr(\nabla X)$, then
\[
\frac{d}{dt}\log\det A(t)\leq F(\nabla X_{\varphi_t(x)}).
\]
It follows that
\[
\frac{\det(d\varphi_{t_1})}{\det(d\varphi_{t_0})}\leq \exp(\int_{t_0}^{t_1}F(\nabla X_{\varphi_t(x)})dt)\leq \frac{b^{\frac{1}{k_1+k_2}}_{(k_1+k_2)k_3}(t_1)}{b^{\frac{1}{k_1+k_2}}_{(k_1+k_2)k_3}(t_0)},
\]
where
\[
b_K(t)= \begin{cases}
\frac{1}{\sqrt K}\sin(\sqrt Kt) & \mbox{if $K>0$}\\
t & \mbox{if $K= 0$}\\
\frac{1}{\sqrt{-K}}\sinh(\sqrt{-K}t) & \mbox{if $K<0$}.
\end{cases}
\]

\end{proof}

\smallskip

\section{A generalization of the Li-Yau estimate: the case on compact manifolds}

In this section, we prove the following generalization of the Li-Yau estimate.

\begin{thm}\label{main3}
Assume that the Ricci curvature of the underlying compact Riemannian manifold $M$ is non-negative. Let $U_1$ and $U_2$ be two functions on $M$ satisfying
\[
\Delta\left(-\Delta U_1-\frac{1}{2}|\nabla U_1|^2+2U_2\right)\geq k_3.
\]
Then any positive solution $\rho_t$ of the equation
\begin{equation}\label{2nd}
\dot\rho_t=\Delta\rho_t+\left<\nabla\rho_t,\nabla U_1\right>+U_2\rho_t.
\end{equation}
satisfies
\[
2\Delta\log\rho_t+\Delta U_1 \geq -n a_{\frac{k_3}{n}}(t).
\]
\end{thm}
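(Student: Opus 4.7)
The plan is to reduce Theorem \ref{main3} to Theorem \ref{main1} by applying the latter with $F = \tr$ and $X_t := \nabla\psi_t$, where $\psi_t := -2\log\rho_t - U_1$. With this choice $F(\nabla X_t) = \Delta\psi_t =: \Phi$, and the conclusion $2\Delta\log\rho_t + \Delta U_1 \geq -n\,a_{k_3/n}(t)$ is just $\Phi \leq n\,a_{k_3/n}(t)$. Matching constants in Theorem \ref{main1} forces $k_1 = 1/n$ (from Cauchy--Schwarz in the form $\tr(B^2) \geq \tfrac{1}{n}(\tr B)^2$) and $k_2 = 0$, so that $1/(k_1+k_2) = n$ and $(k_1+k_2)k_3 = k_3/n$.

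The first step is to derive a Hamilton--Jacobi-type equation for $\psi_t$. Substituting $\dot\rho_t/\rho_t = \Delta\log\rho_t + |\nabla\log\rho_t|^2 + \langle\nabla U_1,\nabla\log\rho_t\rangle + U_2$ into $\dot\psi_t = -2\dot\rho_t/\rho_t$ and completing the square in $\nabla\log\rho_t$ to absorb the cross term into $|\nabla\psi_t|^2$, a direct computation yields
\[
\dot\psi_t \;=\; \Delta\psi_t - \tfrac{1}{2}|\nabla\psi_t|^2 + V, \qquad V := \Delta U_1 + \tfrac{1}{2}|\nabla U_1|^2 - 2U_2.
\]
This is the only place the equation for $\rho_t$ enters, and the hypothesis of Theorem \ref{main3} now reads $\Delta(-V) \geq k_3$.

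Next I would take $b_t \equiv 1$, $Y_t \equiv 0$, and $W_t := -\tfrac{\Delta V}{n}\,I$, and verify the four hypotheses of Theorem \ref{main1}. Hypotheses (1) and (4) are immediate. For (2), using $\ric \geq 0$,
\[
F'(\nabla X_t)\bigl(W_t + \Rm(\cdot,X_t)X_t\bigr) \;=\; \tr(W_t) + \ric(X_t,X_t) \;\geq\; -\Delta V \;\geq\; k_3.
\]
For (3), the key identity is that, because $X_t$ is a gradient, $\nabla_{X_t}X_t = \tfrac{1}{2}\nabla|\nabla\psi_t|^2$, so the Hamilton--Jacobi equation above gives
\[
\dot X_t + \nabla_{X_t}X_t \;=\; \nabla\dot\psi_t + \tfrac{1}{2}\nabla|\nabla\psi_t|^2 \;=\; \nabla(\Phi + V).
\]
Taking divergence yields $\text{div}(\dot X_t + \nabla_{X_t}X_t) = \Delta\Phi + \Delta V$, and with $\tr(W_t) = -\Delta V$ and $k_2 = 0$ the left-hand side of (3) collapses to $\Delta\Phi$, matching the right-hand side $b_t\Delta\Phi$ exactly.

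Theorem \ref{main1} then delivers $\Phi \leq n\,a_{k_3/n}(t)$, which is the desired estimate. The main subtlety, rather than a genuine obstacle, is spotting the correct substitution $\psi_t = -2\log\rho_t - U_1$ and the $W_t$ designed to absorb $\Delta V$; once those are in hand, the verification of hypothesis (3) collapses to a one-line computation, because the usual Bochner cancellation is already hidden in the fact that $\dot X_t + \nabla_{X_t}X_t$ is a pure gradient.
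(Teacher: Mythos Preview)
Your proposal is correct and follows essentially the same route as the paper: both set $h_t=\psi_t=-2\log\rho_t-U_1$, take $F=\tr$, $X_t=\nabla\psi_t$, $k_1=1/n$, $k_2=0$, $b_t\equiv 1$, and verify the hypotheses of Theorem~\ref{main1} via the Hamilton--Jacobi identity $\dot\psi_t+\tfrac12|\nabla\psi_t|^2=\Delta\psi_t+V$. The only cosmetic difference is that you make the choices $W_t=-\tfrac{\Delta V}{n}I$ and $Y_t\equiv 0$ explicit, whereas the paper leaves $W_t$ implicit and just observes that (\ref{m3}) holds.
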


By integrating the above generalization of Li-Yau estimate, one obtains a Harnack inequality. For this, we need to consider the following functional
\[
\int_{s_0}^{s_1}\frac{1}{2}|\dot\gamma(\tau)|^2+V(\gamma(\tau))d\tau,
\]
where $\gamma:[s_0,s_1]\to M$ and $V=\Delta U_1+\frac{1}{2}|\nabla U_1|^2-2U_2$.

Let $c_{s_0,s_1}$ be the corresponding cost function defined by
\begin{equation}\label{costa}
c_{s_0,s_1}(x,y)=\inf\int_{s_0}^{s_1}\frac{1}{2}|\dot\gamma(\tau)|^2+V(\gamma(\tau))d\tau,
\end{equation}
where the infimum is taken over all paths $\gamma$ satisfying $\gamma(s_0)=x$ and $\gamma(s_1)=y$.

\begin{cor}\label{maincor2}
Under the assumptions of Theorem \ref{main3}, the following holds
\[
\frac{\rho_{s_1}(y)}{\rho_{s_0}(x)}\geq \left(\frac{b_{\frac{k_3}{n}}(s_1)}{b_{\frac{k_3}{n}}(s_0)}\right)^{-\frac{n}{2}}\exp\left(-\frac{1}{2}\left(c_{s_0,s_1}(x,y)+U_1(y)-U_1(x)\right)\right)
\]
\end{cor}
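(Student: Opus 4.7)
The plan is to integrate the Li--Yau inequality of Theorem \ref{main3} along curves, after a change of variables chosen so that Young's inequality produces exactly the Lagrangian $\frac{1}{2}|\dot\gamma|^2+V$ defining $c_{s_0,s_1}$.

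First I would introduce the shifted logarithm $g_t:=\log\rho_t+\tfrac{1}{2}U_1$. A direct calculation using the equation $\dot\rho_t=\Delta\rho_t+\langle\nabla\rho_t,\nabla U_1\rangle+U_2\rho_t$ and the definition $V=\Delta U_1+\tfrac12|\nabla U_1|^2-2U_2$ collapses the drift term and the zeroth-order term into a single perfect square, yielding
\[
\dot g_t \;=\; \Delta g_t+|\nabla g_t|^2-\tfrac12 V.
\]
The bound of Theorem \ref{main3} can then be rewritten as $\Delta g_t=\Delta\log\rho_t+\tfrac12\Delta U_1\geq -\tfrac{n}{2}a_{k_3/n}(t)$, so
\[
\dot g_t \;\geq\; |\nabla g_t|^2-\tfrac12 V-\tfrac{n}{2}a_{k_3/n}(t).
\]

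Next I would evaluate $t\mapsto g_t(\gamma(t))$ along an arbitrary smooth path $\gamma:[s_0,s_1]\to M$ with $\gamma(s_0)=x$ and $\gamma(s_1)=y$. By the chain rule and Young's inequality $\langle\nabla g_t,\dot\gamma\rangle\geq -|\nabla g_t|^2-\tfrac14|\dot\gamma|^2$, the two $|\nabla g_t|^2$ contributions cancel exactly, leaving
\[
\frac{d}{dt}g_t(\gamma(t))\;\geq\;-\tfrac14|\dot\gamma(t)|^2-\tfrac12 V(\gamma(t))-\tfrac{n}{2}a_{k_3/n}(t).
\]
Integrating from $s_0$ to $s_1$, using the identity $a_K=(\log b_K)'$ so that $\int_{s_0}^{s_1}a_{k_3/n}(t)\,dt=\log(b_{k_3/n}(s_1)/b_{k_3/n}(s_0))$, and taking the infimum over all admissible paths $\gamma$ to produce $c_{s_0,s_1}(x,y)$ yields
\[
g_{s_1}(y)-g_{s_0}(x)\;\geq\;-\tfrac12 c_{s_0,s_1}(x,y)-\tfrac{n}{2}\log\frac{b_{k_3/n}(s_1)}{b_{k_3/n}(s_0)}.
\]
Substituting back $g_t=\log\rho_t+\tfrac12 U_1$ and exponentiating gives the claimed inequality.

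The two essential choices are the shift by $\tfrac12 U_1$, which is forced by the requirement that $|\nabla\log\rho_t|^2$ and $\langle\nabla\log\rho_t,\nabla U_1\rangle$ recombine into a square, and the weight $\tfrac14$ in Young's inequality, which is the unique constant matching the kinetic coefficient $\tfrac12$ in the Lagrangian. With these two calibrations in place the argument is purely a book-keeping exercise. The only mild technical subtlety is the existence of a minimizer of the action in (\ref{costa}); on the compact manifold $M$ this is standard, and in any case the infimum may be realized along a minimizing sequence of smooth curves, so the resulting inequality passes to the limit.
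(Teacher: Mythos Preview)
Your argument is correct and is essentially the same as the paper's: the paper works with $h_t=-2\log\rho_t-U_1=-2g_t$, uses the identity $\dot h_t+\tfrac12|\nabla h_t|^2=\Delta h_t+V$ (equivalent to your equation for $g_t$), applies Young's inequality along a curve, and invokes Theorem \ref{main3} before integrating. The only cosmetic differences are the sign/scaling convention and that the paper integrates directly along a minimizer of (\ref{costa}) rather than along an arbitrary path followed by taking the infimum.
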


If we let $\rho_t$ be the fundamental solution $p_t(x,y)$ of the equation (\ref{2nd}) and let $s\to 0$ in Corollary \ref{maincor2}, then we obtain the following generalization of Cheeger-Yau estimate \cite{ChYa}.

\begin{cor}\label{maincor3}
Let $p_t$ be the fundamental solution of the equation (\ref{2nd}). Under the assumptions of Theorem \ref{main3}, the following holds
\[
p_t(x,y)\geq \left(4\pi b_{\frac{k_3}{n}}(t)\right)^{-\frac{n}{2}}\exp\left(-\frac{1}{2}\left(c_{0,t}(x,y)+U_1(y)-U_1(x)\right)\right).
\]
\end{cor}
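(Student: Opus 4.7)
The plan is to specialize Corollary \ref{maincor2} to the fundamental solution itself and then send the earlier time to zero. Fix $x \in M$ and let $\rho_\tau := p_\tau(x, \cdot)$, which is a positive solution of (\ref{2nd}). Applying Corollary \ref{maincor2} with $s_0 = s \in (0, t)$ and $s_1 = t$, and pairing the point $x$ at time $s$ with the point $y$ at time $t$, yields
\[
p_t(x,y) \ge p_s(x,x) \left(\frac{b_{k_3/n}(t)}{b_{k_3/n}(s)}\right)^{-n/2} \exp\left(-\frac{1}{2}\bigl(c_{s,t}(x,y) + U_1(y) - U_1(x)\bigr)\right).
\]

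It then suffices to pass to the limit $s \to 0^+$. Three ingredients are needed. First, the small-time diagonal asymptotic $p_s(x, x) \sim (4\pi s)^{-n/2}$ continues to hold for the perturbed operator in (\ref{2nd}), since the smooth drift and potential terms are lower-order perturbations of the Laplacian and modify only the higher-order coefficients in the parametrix expansion. Second, Taylor expansion at the origin gives $b_{k_3/n}(s) = s + O(s^3)$, so that $p_s(x, x)\, b_{k_3/n}(s)^{n/2} \to (4\pi)^{-n/2}$. Third, continuity of the cost function at the left endpoint follows from the reparameterization identity $c_{s,t}(x,y) = c_{0, t-s}(x, y)$ (valid because $V$ is time-independent) combined with continuity of $\tau \mapsto c_{0, \tau}(x, y)$ on $(0, t]$. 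Substituting these limits into the displayed inequality produces the stated bound.

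The main obstacle is pinning down the constant $(4\pi)^{-n/2}$ in the diagonal asymptotic of $p_s(x,x)$, since the sharpness claim for the corollary depends on this constant being exactly right. The cleanest way to proceed is to invoke the standard Minakshisundaram--Pleijel parametrix, writing $p_s(x, y) = (4\pi s)^{-n/2} e^{-d(x,y)^2/(4s)}\bigl(a_0(x,y) + s\, a_1(x,y) + \cdots\bigr)$ with $a_0(x, x) = 1$, and noting that the first-order drift and zeroth-order potential terms only affect the coefficients $a_1, a_2, \ldots$. The continuity of $c_{0, \tau}$, a secondary issue, is handled routinely using smoothness of $V$ and compactness of $M$.
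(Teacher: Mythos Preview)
Your proposal is correct and follows essentially the same route as the paper: apply Corollary \ref{maincor2} to $\rho_\tau=p_\tau(x,\cdot)$ between times $s$ and $t$, then let $s\to 0$ using the diagonal asymptotic $\lim_{s\to 0}(4\pi s)^{n/2}p_s(x,x)=1$ (which the paper simply cites from \cite{Se,Gi}) together with $b_{k_3/n}(s)\sim s$. Your additional care with the Minakshisundaram--Pleijel expansion and the reparameterization $c_{s,t}=c_{0,t-s}$ just makes explicit what the paper leaves implicit.
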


Finally, we will show that the equality case in Corollary \ref{maincor3} is achieved by (\ref{fund}). More precisely,

\begin{thm}\label{maincor4}
Let $\rho_t$ be defined by (\ref{fund}), $U_1(x)=-\frac{k}{2}|x|^2$, and $U_2\equiv 0$. Then
\[
p_t(0,x)= \exp\left(-\frac{1}{2}\left(c_{0,t}(0,x)+U_1(x)-U_1(0)\right)\right)\left(4\pi b_{-k^2}(t)\right)^{-\frac{n}{2}}.
\]
\end{thm}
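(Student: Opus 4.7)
The proof is essentially a direct verification, so my plan is to explicitly compute both sides and compare.

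First I would specialize the hypotheses: with $U_1(x) = -\frac{k}{2}|x|^2$, $U_2 \equiv 0$ on $M = \Real^n$, one has $\nabla U_1 = -kx$, $\Delta U_1 = -nk$, hence
\[
V := \Delta U_1 + \tfrac{1}{2}|\nabla U_1|^2 - 2U_2 = -nk + \tfrac{k^2}{2}|x|^2.
\]
Also $U_1(x) - U_1(0) = -\tfrac{k}{2}|x|^2$, and since $-k^2 < 0$, the function $b_{-k^2}(t) = \sinh(kt)/k$. So the right-hand side of the claimed identity becomes
\[
\left(\frac{4\pi\sinh(kt)}{k}\right)^{-n/2}\exp\!\left(-\tfrac{1}{2}c_{0,t}(0,x) + \tfrac{k}{4}|x|^2\right).
\]

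Next I would evaluate $c_{0,t}(0,x)$ explicitly. The integrand $\tfrac12|\dot\gamma|^2 + V(\gamma)$ separates into a constant $-nk$ and the harmonic-oscillator Lagrangian $\tfrac12|\dot\gamma|^2 + \tfrac{k^2}{2}|\gamma|^2$. The Euler-Lagrange equation $\ddot\gamma = k^2\gamma$ with boundary conditions $\gamma(0)=0$, $\gamma(t)=x$ is uniquely solved by $\gamma(\tau) = x\,\sinh(k\tau)/\sinh(kt)$. Plugging in, $|\dot\gamma|^2 + k^2|\gamma|^2 = k^2|x|^2\cosh(2k\tau)/\sinh^2(kt)$, and integrating from $0$ to $t$ with the elementary identity $\sinh(2kt) = 2\sinh(kt)\cosh(kt)$ collapses the expression to $\tfrac{k|x|^2}{2}\coth(kt)$. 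Subtracting the constant piece gives
\[
c_{0,t}(0,x) = \tfrac{k|x|^2}{2}\coth(kt) - nkt,
\]
which is the formula asserted after Theorem~\ref{newCheYau}.

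Then I would substitute this back into the right-hand side above and use the identity $\coth(kt) - 1 = 2/(e^{2kt}-1)$ to rewrite
\[
-\tfrac{k}{4}|x|^2(\coth(kt)-1) = -\tfrac{k|x|^2}{2(e^{2kt}-1)}.
\]
This produces
\[
e^{nkt/2}\left(\frac{4\pi\sinh(kt)}{k}\right)^{-n/2}\exp\!\left(-\frac{k|x|^2}{2(e^{2kt}-1)}\right).
\]
A one-line algebraic check (using $1 - e^{-2kt} = 2e^{-kt}\sinh(kt)$) shows $e^{nkt/2}(4\pi\sinh(kt)/k)^{-n/2} = (2\pi(e^{2kt}-1)/(ke^{2kt}))^{-n/2}$, so the entire expression equals $\rho_t(x)$ of (\ref{fund}).

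The only remaining point is to justify that the function $\rho_t$ of (\ref{fund}) is indeed the fundamental solution $p_t(0,x)$ of (\ref{sharp}). I would verify this by checking that $\rho_t$ solves (\ref{sharp}) (a direct computation on the Gaussian, already essentially noted in the introduction) and that the initial data is a Dirac mass: as $t\to 0^+$, $\sinh(kt)/k \sim t$ and $(e^{2kt}-1)/(2k) \sim t$, so $\rho_t$ reduces to the Euclidean heat kernel concentrated at the origin, hence $\rho_t \to \delta_0$. I do not anticipate any serious obstacle; the only place requiring a little care is the hyperbolic-function bookkeeping relating the three formulas for the Gaussian normalization. The essence of the theorem is that in the Euclidean model the Hamilton-Jacobi/least-action problem defining $c_{0,t}$ is exactly a harmonic oscillator, so the equality case in Corollary~\ref{maincor3} (equivalently Theorem~\ref{newCheYau}) is saturated by the explicit Gaussian (\ref{fund}).
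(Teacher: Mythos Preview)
Your proposal is correct and follows essentially the same approach as the paper: both compute $V$, solve the Euler--Lagrange equation $\ddot\gamma = k^2\gamma$ to obtain $\gamma(\tau)=x\sinh(k\tau)/\sinh(kt)$, derive $c_{0,t}(0,x)=\tfrac{k|x|^2}{2}\coth(kt)-nkt$, and then verify the resulting expression matches the Gaussian (\ref{fund}). Your write-up is in fact more thorough than the paper's, which omits the hyperbolic identities and the check that $\rho_t\to\delta_0$ as $t\to 0^+$.
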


\begin{proof}[Proof of Theorem \ref{main3}]
If we specialize Theorem \ref{main1} to the case where $F=\tr$ and $X_t=\nabla h_t$, then the assumptions of Theorem \ref{main1} are satisfied if $k_1=\frac{1}{n}$,  $\tr(W_t)+\ric(X_t,X_t)\geq k_3$, and
\begin{equation}\label{m3}
\begin{split}
&\Delta\left( \dot h_t+\frac{1}{2}|\nabla h_t|^2\right)+k_2(\Delta h_t)^2+\tr(W_t)\\
&\leq b_t\Delta\Delta h_t+\left<\nabla \Delta h_t, Y_t\right>.
\end{split}
 \end{equation}

Let $h_t=-2\log \rho_t-U_1$. Then the following holds
\[
\dot h_t+\frac{1}{2}|\nabla h_t|^2=\Delta h_t+\Delta U_1+\frac{1}{2}|\nabla U_1|^2-2U_2.
\]

Therefore, under the assumptions of the theorem, (\ref{m3}) holds with $k_2=0$ and $b_t\equiv 1$. Hence, the result follows from Theorem \ref{main1}.
\end{proof}

\begin{proof}[Proof of Corollary \ref{maincor2}]
Let $\gamma$ be a minimizer of (\ref{costa}) which satisfies $\gamma(s_0)=x_0$ and $\gamma(s_1)=x_1$. Using the notations in the proof of Theorem \ref{main3}, we have
\[
\begin{split}
&\frac{d}{dt} h_t(\gamma(t))-\frac{1}{2}|\dot\gamma(t)|^2\\
&\leq\frac{d}{dt} h_t(\gamma(t))-\left<\nabla h_t(\gamma(t)),\dot\gamma(t)\right>+\frac{1}{2}|\nabla h_t|^2_{\gamma(t)}\\
&=\Delta h_t(\gamma(t))+\Delta U_1(\gamma(t))+\frac{1}{2}|\nabla U_1|^2_{\gamma(t)}-2U_2(\gamma(t))\\
&\leq na_{\frac{k_3}{n}}(t)+V(\gamma(t)).
\end{split}
\]
In the last inequality above, we have used Theorem \ref{main3}.

By integrating the above inequality and noting that $\dot b_K=b_Ka_K$, we obtain
\[
h_{s_1}(x_1)-h_{s_0}(x_0)\leq c_{s_0,s_1}(x_0,x_1)+n\log(b_{\frac{k_3}{n}}(s_1))-n\log(b_{\frac{k_3}{n}}(s_0)).
\]

By taking exponential of the above inequality, the result follows.
\end{proof}

\begin{proof}[Proof of Corollary \ref{maincor3}]
By Corollary \ref{maincor2}, we have
\[
\frac{p_t(x,y)}{p_s(x,x)}\geq \exp\left(-\frac{1}{2}\left(c_{s,t}(x,y)+U_1(y)-U_1(x)\right)\right)\left(\frac{b_{\frac{k_3}{n}}(t)}{b_{\frac{k_3}{n}}(s)}\right)^{-\frac{n}{2}}.
\]

Since $\lim_{s\to 0}(4\pi s)^{n/2}p_s(x,x)=1$ (see \cite{Se,Gi}), the above inequality gives
\[
p_t(x,y)\geq \exp\left(-\frac{1}{2}\left(c_{0,t}(x,y)+U_1(y)-U_1(x)\right)\right)\left(4\pi b_{\frac{k_3}{n}}(t)\right)^{-\frac{n}{2}}
\]
as claimed.
\end{proof}

\begin{proof}[Proof of Theorem \ref{maincor4}]
In this special case, the cost function (\ref{costa}) is given by
\begin{equation}\label{m4}
c_{0,t}(0,y)=\inf\int_{0}^{t}\frac{1}{2}|\dot\gamma(s)|^2+V(\gamma(s))ds,
\end{equation}
where $V(x)=-kn+\frac{1}{2}k^2|x|^2$ and the infimum is taken over all paths $\gamma$ satisfying $\gamma(0)=0$ and $\gamma(t)=y$.

If $x(\cdot)$ is a minimizer of the above infimum, then it satisfies the following equations (see \cite{Ev})
\[
\dot x=p,\quad \quad \dot p=k^2 x.
\]

Since $x(0)=0$ and $x(t)=y$, it follows that
\[
x(s)=\frac{\sinh(ks)}{\sinh(kt)}\, x(t).
\]
If we substitute this back into (\ref{m4}), then we obtain
\[
c_{0,t}(0,y)=\frac{k|y|^2\coth(kt)}{2}-knt.
\]

A computation shows that
\[
\begin{split}
p_t(0,y)&= \exp\left(-\frac{1}{2}\left(c_{0,t}(0,y)-\frac{k}{2}|y|^2\right)\right)\left(4\pi b_{-k^2}(t)\right)^{-\frac{n}{2}}\\
&=\exp\left(\frac{-k|y|^2}{2(\exp(2kt)-1)}\right)\left(\frac{2\pi (\exp(2tk)-1)}{k\exp(2kt)}\right)^{-\frac{n}{2}}
\end{split}
\]
as claimed.
\end{proof}

\smallskip

\section{A generalization of Hamilton's matrix Li-Yau estimate}

In this section, we show that the following generalization of Hamilton-Li-Yau estimate is a consequence of Theorem \ref{main2}.

\begin{thm}\label{main4}
Assume that the sectional curvature of the underlying compact Riemannian manifold $M$ is non-negative and the Ricci curvature is parallel. Let $U_1$ and $U_2$ be two functions on $M$ satisfying
\[
-\nabla^2\left(\Delta U_1+\frac{1}{2}|\nabla U_1|^2-2U_2\right)\geq k_3I,
\]
Then any solution $\rho_t$ of the equation (\ref{2nd}) satisfies
\[
-2\hess\log \rho_t-\hess U_1\leq a_{k_3}(t)I.
\]
\end{thm}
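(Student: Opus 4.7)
My plan is to deduce Theorem \ref{main4} from Theorem \ref{main2} by the exact analogue of the reduction used in the proof of Theorem \ref{main3}. Set $h_t := -2\log\rho_t - U_1$ and $X_t := \nabla h_t$. The same computation as in the proof of Theorem \ref{main3} uses the PDE satisfied by $\rho_t$ to yield
\[
\dot h_t + \tfrac{1}{2}|\nabla h_t|^2 = \Delta h_t + V_0, \qquad V_0 := \Delta U_1 + \tfrac{1}{2}|\nabla U_1|^2 - 2 U_2,
\]
and combined with the gradient identity $\nabla_{X_t} X_t = \tfrac{1}{2}\nabla|\nabla h_t|^2$, this gives $\dot X_t + \nabla_{X_t} X_t = \nabla(\Delta h_t + V_0)$. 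The target bound $\hess h_t \leq a_{k_3}(t) I$ is exactly the conclusion of Theorem \ref{main2} in the case $k_2=0$ (so that $\tfrac{1}{1+k_2}\, a_{(1+k_2)k_3}(t) = a_{k_3}(t)$), and since $\hess h_t = -2\hess\log\rho_t - \hess U_1$, it rewrites as the claimed inequality.

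Next I would make the concrete choices $k_2 = 0$, $b_t \equiv 1$, $Y_t \equiv 0$, and take $W_t := -\hess V_0$ as a symmetric bundle homomorphism, and then verify the three hypotheses of Theorem \ref{main2}. Condition (1) is automatic because $X_t$ is a gradient, so $w \mapsto \langle X_t, w\rangle = dh_t(w)$ is closed. Condition (2) follows directly from the pointwise operator inequality
\[
W_t + \Rm(\cdot, X_t) X_t = -\hess V_0 + \Rm(\cdot, X_t) X_t \geq k_3 I + 0 = k_3 I,
\]
using the hypothesis $-\hess V_0 \geq k_3 I$ together with the non-negative sectional curvature assumption, which gives $R(v, X_t, v, X_t) \geq 0$ for every $v$.

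The main obstacle, and the delicate step, is verifying condition (3) at each max eigenvector $v$ of $\hess h_t$. For this, I would invoke the Bochner-type commutation identity for the Laplacian acting on the Hessian, valid on Ricci-parallel manifolds because the second Bianchi identity and $\nabla\mathrm{Ric}=0$ together force the divergence of the Riemann tensor on its first index to vanish. The identity reads
\[
(\Delta \hess h_t)(v, v) = \hess(\Delta h_t)(v, v) + 2\sum_a \lambda_a R(v, e_a, v, e_a) - 2\lambda\,\mathrm{Ric}(v, v),
\]
where $\{e_a\}$ is an orthonormal frame diagonalizing $\hess h_t$ with eigenvalues $\{\lambda_a\}$ and $\lambda$ is the top eigenvalue with eigenvector $v$. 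Using $\mathrm{Ric}(v,v)=\sum_a R(v,e_a,v,e_a)$, this rewrites as the curvature correction $2\sum_a (\lambda_a - \lambda) R(v, e_a, v, e_a)$, which is non-positive under $\lambda_a \leq \lambda$ and non-negative sectional curvature. Combined with the computation $\langle \nabla_v(\dot X_t + \nabla_{X_t} X_t), v\rangle = \hess(\Delta h_t)(v, v) + \hess V_0(v,v)$ and the choice $W_t = -\hess V_0$, this produces the inequality required by condition (3) at max eigenvectors.

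Once the three hypotheses are verified, Theorem \ref{main2} directly yields $\hess h_t \leq a_{k_3}(t) I$, which is the statement of Theorem \ref{main4}. The only real work in the proof is carrying out the Bochner commutation carefully and checking its non-positivity at maximal eigendirections; everything else is a bookkeeping exercise that closely parallels the scalar argument given for Theorem \ref{main3}.
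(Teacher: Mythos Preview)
Your approach is exactly the paper's: set $h_t=-2\log\rho_t-U_1$, $X_t=\nabla h_t$, choose $k_2=0$, $b_t\equiv 1$, $Y_t\equiv 0$, $W_t=-\hess V_0$, and feed everything into Theorem~\ref{main2}. The reduction of conditions (1) and (2) is correct, and your identification of condition (3) with the commutator inequality $\hess(\Delta h_t)(v,v)\le(\Delta\hess h_t)(v,v)$ at a top eigenvector is also right.

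The one slip is the sign in your Bochner commutation formula. The correct identity (this is precisely what the paper isolates as a lemma, derived from repeated Ricci identities together with the contracted Bianchi identity and $\nabla\ric=0$) is
\[
(\Delta\hess h_t)(v,v)=\hess(\Delta h_t)(v,v)+2\lambda\,\ric(v,v)-2\sum_a\lambda_a\,R(v,e_a,v,e_a)
=\hess(\Delta h_t)(v,v)+2\sum_a(\lambda-\lambda_a)\,R(v,e_a,v,e_a),
\]
so the curvature correction is \emph{non-negative} when $\lambda$ is the largest eigenvalue and the sectional curvature is non-negative. That is the direction you need: it yields $(\Delta\hess h_t)(v,v)\ge\hess(\Delta h_t)(v,v)$, which is exactly condition (3). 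Your version has the two curvature terms swapped, giving a non-positive correction and hence the reverse inequality, which would not verify (3). Once the sign is fixed, your argument and the paper's coincide.
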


\begin{proof}
We need the following lemma.

\begin{lem}
Assume that the sectional curvature of a Riemannian manifold is non-negative at a point $x$ and the Ricci curvature $\ric$ satisfies $\nabla\ric_x=0$. Then, for any smooth function $f$, the following holds
\[
\begin{split}
&\Delta(\nabla_vdf(v))(x) \geq \left<\nabla_v\nabla\Delta f,v\right>_x.
\end{split}
\]
Here we consider the Hessian $\nabla df$ of $f$ as a self-adjoint operator on $T_xM$. The vector field $v$ is defined as an eigenvector of the operator $\nabla df$ at $x$ corresponding to the largest eigenvalue and it is extended to a neighborhood of $x$ by parallel translation along geodesics.
\end{lem}

\begin{proof}
Let $e_1,...,e_n$ be an orthonormal frame at $x$ and let us extend them to vector fields defined locally near $x$ by parallel translation along geodesics. It follows that $\nabla v(x)=0$ and $\nabla_{e_i}\nabla_{e_i} v(x)=0$ (throughout this proof we sum over repeated indices without mentioning). Therefore,
\[
\Delta(\nabla_vdf(v))=\nabla_{e_i}\nabla_{e_i}\nabla_vdf(v).
\]

Let $\alpha$ be a $(0,1)$-tensor and $\beta$ be a $(0,2)$-tensor. By Ricci identity, we have
\begin{enumerate}
\item $\nabla_{v_1}\nabla_{v_2}\alpha(v_3)=\nabla_{v_2}\nabla_{v_1}\alpha(v_3)-\alpha(\Rm(v_1,v_2)v_3)$,
\item $\nabla_{v_4}\nabla_{v_1}\nabla_{v_2}\alpha(v_3)\\
=\nabla_{v_4}\nabla_{v_2}\nabla_{v_1}\alpha(v_3)-\nabla_{v_4}\alpha(\Rm(v_1,v_2)v_3)-\alpha(\nabla_{v_4}\Rm(v_1,v_2)v_3)$,
\item  $\nabla_{v_1}\nabla_{v_2}\beta(v_3,v_4)\\
=\nabla_{v_2}\nabla_{v_1}\beta(v_3,v_4)-\beta(\Rm(v_1,v_2)v_3,v_4)-\beta(v_3,\Rm(v_1,v_2) v_4)$.
\end{enumerate}
Here, for instance, $\nabla_{v_4}\nabla_{v_1}\nabla_{v_2}\alpha(v_3)$ denotes
\[
\nabla(\nabla(\nabla\alpha))(v_4,v_1,v_2, v_3).
\]

It follows that
\[
\begin{split}
\Delta(\nabla_vdf(v))
&=\nabla_{e_i}(\nabla_{v}\nabla_{e_i}df(v)-df(\Rm(e_i,v)v))\\
&=\nabla_{e_i}\nabla_{v}\nabla_{v}df(e_i)-\nabla_{e_i}df(\Rm(e_i,v)v)\\
&\quad -df(\nabla_{e_i}\Rm(e_i,v)v)\\
&=\nabla_v\nabla_{e_i}\nabla_{v}df(e_i)-\nabla_{e_i} df(\Rm(e_i,v)v)\\
&\quad -\nabla_v df(\Rm(e_i,v)e_i)-\nabla_{e_i}df(\Rm(e_i,v)v)\\
&\quad  -df(\nabla_{e_i}\Rm(e_i,v)v)\\
&=\nabla_v\nabla_{v}\nabla_{e_i}df(e_i)-\nabla_{v}df(\Rm(e_i,v)e_i)\\
&\quad -df(\nabla_{v}\Rm(e_i,v)e_i) -\nabla_{e_i} df(\Rm(e_i,v)v)\\
&\quad -\nabla_v df(\Rm(e_i,v)e_i)-\nabla_{e_i}df(\Rm(e_i,v)v)\\
&\quad  -df(\nabla_{e_i}\Rm(e_i,v)v)\\
&=\left<\nabla_v\nabla\Delta f,v\right>-2\nabla_{v}df(\Rm(e_i,v)e_i)\\
&\quad -df(\nabla_{v}\Rm(e_i,v)e_i) -2\nabla_{e_i} df(\Rm(e_i,v)v)\\
&\quad  -df(\nabla_{e_i}\Rm(e_i,v)v).
\end{split}
\]

Since the Ricci curvature is parallel, we have, by the contracted Bianchi identity,
\[
\begin{split}
&\Delta(\nabla_vdf(v))=\left<\nabla_v\nabla\Delta f,v\right>-2\nabla_{v}df(\Rm(e_i,v)e_i) -2\nabla_{e_i} df(\Rm(e_i,v)v).
\end{split}
\]

If $e_i$ is an eigenvector of the hessian of $f$ with eigenvalue $\lambda_i$ and $v$ is an eigenvector of the hessian of $f$ with the largest eigenvalue $\lambda$, then
\[
\begin{split}
&\Delta(\nabla_vdf(v))\\
&=\left<\nabla_v\nabla\Delta f,v\right>+2\lambda \ric(v,v) -2\lambda_i\left<e_i,\Rm(e_i,v)v\right>\\
&\geq \left<\nabla_v\nabla\Delta f,v\right>.
\end{split}
\]
Here we use the assumption that the sectional curvature is non-negative.
\end{proof}

When $X_t=\nabla h_t$, the conditions become $\mathcal W(t)+R(t)\geq k_3I$ and
\[
\begin{split}
&\left<\nabla_v\nabla\left(\dot h_t+\frac{1}{2}|\nabla h_t|^2\right),v\right>+k_2\left<(\nabla^2 h_t)^2v,v\right>+\left<W_tv,v\right>\\
&\leq b_t\left<\Delta\hess h_t(v),v\right>+\left<\nabla_{Y_t} \hess h_t(v),v\right>
\end{split}
 \]
for each eigenvector $v$ of the symmetric operator $\hess h_t$ with the largest eigenvalue.

Recall that if $\rho_t$ is a positive solution of the equation
\[
\dot\rho_t=\Delta\rho_t+\left<\nabla\rho_t,\nabla U_1\right>+U_2\rho_t,
\]
then $h_t=-2\log \rho_t-U_1$ satisfies
\[
\dot h_t+\frac{1}{2}|\nabla h_t|^2=\Delta h_t+\Delta U_1+\frac{1}{2}|\nabla U_1|^2-2U_2.
\]

It follows that
\[
\nabla^2\left(\dot h_t+\frac{1}{2}|\nabla h_t|^2\right)+W_t=\nabla^2\Delta h_t,
\]
where $W_t=-\nabla^2\left(\Delta U_1+\frac{1}{2}|\nabla U_1|^2-2U_2\right)$.

Therefore, if we assume that the Ricci curvature is parallel, the sectional curvature is non-negative, and
$W_t\geq k_3 I$, then
\[
\begin{split}
&\left<\nabla^2\left(\dot h_t+\frac{1}{2}|\nabla h_t|^2\right)(v),v\right>+\left<W_t(v),v\right>\leq \left<\Delta \hess h_t(v),v\right>.
\end{split}
 \]

It follows that
\[
\hess h_t=-2\hess\log \rho_t-\hess U_1\leq a_{k_3}(t)I.
\]
\end{proof}

\smallskip

\section{A generalization of Huisken's monotonicity formula}

This section is devoted to the proof of Theorem \ref{main}. First, let us recall the notations used. Let $M$ be a submanifold of dimension $m$ in a Riemannian manifold $N$ of dimension $n$. The mean curvature flow is a family of immersions $\varphi_t:M\to N$ which satisfy
\[
\dot\varphi_t=\meanvec_t(\varphi_t)+\nabla_t^\perp U(\varphi_t),
\]
where $\meanvec_t$ is the mean curvature vector of $M_t:=\varphi_t(M)$, $\bar\nabla U$ denotes the gradient of $U$ with respect to the Riemannian metric on $N$, and $\nabla_t^\perp U$ is the projection of $\bar\nabla_t U$ onto the normal bundle of $M_t$. We also introduce the following notation for the part of the Laplacian in the normal bundle $\Delta_t^\perp U=\sum_k\left<\bar\nabla_{\bn_k}\bar\nabla U,\bn_k\right>$.

\begin{thm}\label{genHui}
Assume that the sectional curvature of the underlying compact Riemannian manifold $N$ is non-negative and the Ricci curvature is parallel. Let $U:M\to\Real$ be a smooth function satisfying the following condition for some positive constant $k$:
\[
\nabla^2\left(\Delta U -\frac{1}{2}|\nabla U|^2\right)\geq k_3I,
\]
Let $\varphi_t$ be a solution of (\ref{newmeanflow}) and let $\rho_t$ be a positive solution of the equation
\[
\dot\rho_t=\bar\Delta \rho_t+\left<\bar\nabla U,\bar\nabla \rho_t\right>+\rho_t\bar\Delta U
\]
on $N$. Then
\[
\begin{split}
&\frac{d}{dt}\left(b_{k_3}(T-t)^{\frac{n-m}{2}}\int_{\varphi_t(M)}u_t\,d\mu_t\right)\\
&\leq -b_{k_3}(T-t)^{\frac{n-m}{2}}\int_{\varphi_t(M)}u_t\left(\frac{1}{2}\Delta_t^\perp U+\left|\frac{\nabla_t^\perp u_t}{u_t}-\meanvec_t\right|^2\right) d\mu_t.
\end{split}
\]
\end{thm}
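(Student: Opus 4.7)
The plan is to mimic Huisken's classical derivation of the monotonicity formula, substituting Theorem~\ref{main4} for the Hamilton--Li--Yau estimate. I set $u_t := \rho_{T-t}$ (as in Theorem~\ref{newHuisken}) and apply Theorem~\ref{main4} with $U_1 = U$ and $U_2 = \bar\Delta U$. A short computation gives $\Delta U_1 + \tfrac{1}{2}|\nabla U_1|^2 - 2U_2 = -\bar\Delta U + \tfrac{1}{2}|\bar\nabla U|^2$, so the hypothesis of Theorem~\ref{genHui} is exactly the hypothesis of Theorem~\ref{main4}, yielding $-2\hess\log u_t - \hess U \leq a_{k_3}(T-t)\,I$. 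Tracing this bound over the $(n-m)$-dimensional normal bundle of $M_t$ gives
\[
-2\,\Delta_t^\perp \log u_t - \Delta_t^\perp U \;\leq\; (n-m)\,a_{k_3}(T-t),
\]
which is the estimate that will ultimately close the argument.

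Next I would compute $\frac{d}{dt}\int_{M_t} u_t\,d\mu_t$ directly. Since $X_t := \meanvec_t + \nabla_t^\perp U$ is purely normal, the first variation of volume reads $\frac{d}{dt}d\mu_t = -\langle X_t,\meanvec_t\rangle\,d\mu_t$, while the equation for $\rho_t$ and the reversal $u_t = \rho_{T-t}$ give $\dot u_t = -\bar\Delta u_t - \langle\bar\nabla U,\bar\nabla u_t\rangle - u_t\bar\Delta U$. Combining with $\frac{d}{dt}u_t(\varphi_t) = \dot u_t + \langle\bar\nabla u_t,X_t\rangle$ and applying the Gauss decomposition $\bar\Delta = \Delta_M + \Delta_t^\perp - \meanvec_t\cdot\bar\nabla$ to both $u_t$ and $U$, every cross term of the form $\meanvec_t\cdot\nabla_t^\perp U$ cancels. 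Tangential Stokes on the closed $M_t$ then discards $\int\Delta_M u_t\,d\mu_t$ and cancels $-\int\langle\nabla_t^\top U,\nabla_t^\top u_t\rangle\,d\mu_t$ against $\int u_t\Delta_M U\,d\mu_t$, leaving
\[
\frac{d}{dt}\int_{M_t} u_t\,d\mu_t = \int_{M_t} u_t\Bigl[-\tfrac{\Delta_t^\perp u_t}{u_t} + 2\meanvec_t\cdot\tfrac{\nabla_t^\perp u_t}{u_t} - |\meanvec_t|^2 - \Delta_t^\perp U\Bigr]d\mu_t.
\]

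The algebraic heart of the argument is completion of the square. The identity $\Delta_t^\perp \log u_t = \tfrac{\Delta_t^\perp u_t}{u_t} - \tfrac{|\nabla_t^\perp u_t|^2}{u_t^2}$ rewrites the bracket as $-\Delta_t^\perp \log u_t - \Delta_t^\perp U - \bigl|\tfrac{\nabla_t^\perp u_t}{u_t}-\meanvec_t\bigr|^2$, and inserting the Hessian bound from the first step produces
\[
\frac{d}{dt}\int_{M_t} u_t\,d\mu_t \leq \int_{M_t} u_t\Bigl[\tfrac{n-m}{2}\,a_{k_3}(T-t) - \tfrac{1}{2}\Delta_t^\perp U - \Bigl|\tfrac{\nabla_t^\perp u_t}{u_t}-\meanvec_t\Bigr|^2\Bigr]d\mu_t.
\]
Since $\frac{d}{dt}b_{k_3}(T-t)^{(n-m)/2} = -\tfrac{n-m}{2}\,a_{k_3}(T-t)\,b_{k_3}(T-t)^{(n-m)/2}$, multiplying through by this weight and invoking the product rule absorbs the $a_{k_3}(T-t)$-term into the time derivative of the weighted integral, producing exactly the stated inequality.

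The main technical obstacle is the sign and cross-term bookkeeping in $\frac{d}{dt}\int u_t\,d\mu_t$: three separate contributions (from $\dot u_t$, from $\frac{d}{dt}d\mu_t$, and from $\langle\bar\nabla u_t,X_t\rangle$) each split into tangential and normal parts, and the clean cancellation of the $\meanvec_t\cdot\nabla_t^\perp U$ terms together with the correct tangential integration by parts is what leaves the bracket in a form to which the normal-trace Hamilton--Li--Yau estimate can be applied. Once this reduction is made, the completion of the square and the use of Theorem~\ref{main4} are routine.
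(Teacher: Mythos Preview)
Your proposal is correct and follows essentially the same route as the paper: compute $\frac{d}{dt}\int_{M_t}u_t\,d\mu_t$, apply the Gauss decomposition $\bar\Delta=\Delta_M+\Delta_t^\perp-\langle\meanvec_t,\bar\nabla\cdot\rangle$ to both $u_t$ and $U$, integrate the tangential terms away, complete the square to obtain $-\Delta_t^\perp\log u_t-\Delta_t^\perp U-\bigl|\tfrac{\nabla_t^\perp u_t}{u_t}-\meanvec_t\bigr|^2$, invoke Theorem~\ref{main4} traced over the normal bundle, and absorb the resulting $\tfrac{n-m}{2}a_{k_3}(T-t)$ into the derivative of the weight via $\dot b_{k_3}=a_{k_3}b_{k_3}$. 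The only cosmetic difference is that the paper derives the first variation of $d\mu_t$ through the parallel adapted frame and the matrix identity $\dot A=-\sum_k F_k A\,G^k$ of Lemma~\ref{paflem2}, whereas you quote the standard formula $\frac{d}{dt}d\mu_t=-\langle X_t,\meanvec_t\rangle\,d\mu_t$ directly; the two are equivalent and lead to the identical intermediate expression.
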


The rest of this section is devoted to the proof of the above theorem. Next, we pick a convenient moving frame along $\varphi_t$.

\begin{lem}\label{frame}
Let $\sigma(\cdot)$ be a path in $N$ such that $\sigma(t)$ is contained in $M_t:=\varphi_t(M)$. Then there is a family of orthonormal frames
\[
\bn_1(\psi_t),..., \bn_{n-m}(\psi_t), v_1(t),...,v_m(t)
\]
defined along $\sigma(\cdot)$ such that
\begin{enumerate}
\item $v_1(t),...,v_m(t)$ are contained in the tangent bundle $TM_t$ of $M_t$,
\item $\bn_1(t),...,\bn_{n-m}(t)$ are in the normal bundle $TM_t^\perp$ of $M_t$,
\item $\dot v_1(t),...,\dot v_m(t)$ are in $TM_t^\perp$,
\item $\dot \bn_1(t), ..., \dot \bn_{n-m}(t)$ are in $TM_t$.
\end{enumerate}
Here $\dot v_i(t)$ denotes the covariant derivative of $v_i(t)$ with respect to the Riemannian metric $\left<\cdot,\cdot\right>$ of $N$.

Moreover, if $\tilde\bn_1(t),...,\tilde\bn_{n-m}(t),\tilde v_1(t),...,\tilde v_m(t)$ is another such family, then there are orthogonal matrices $O^{(1)}$ and $O^{(2)}$ (independent of time) of size $(n-m)\times(n-m)$ and $m\times m$, respectively, such that
\[
\tilde \bn_i(t)=\sum_{j=1}^{n-m} O_{ij}^{(1)}\bn_j(t)\text{ and }\tilde v_i(t)=\sum_{j=1}^m O_{ij}^{(2)}v_j(t).
\]
\end{lem}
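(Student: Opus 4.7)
The plan is to construct the two frames separately by solving linear ODEs along $\sigma$, and to derive the uniqueness clause from a one-line differentiation of the change-of-basis matrix.

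For existence, I would first fix any smooth orthonormal frame $e_1(t),\dots,e_m(t)$ for $T_{\sigma(t)}M_t$ along $\sigma$, and any smooth orthonormal frame $f_1(t),\dots,f_{n-m}(t)$ for $T_{\sigma(t)}M_t^{\perp}$; such reference frames exist by local trivializability of the tangent and normal bundles of the smooth family $\{M_t\}$. I then look for $v_i(t) = \sum_j A_{ij}(t)\,e_j(t)$, which automatically satisfies condition (1), and impose condition (3). Expanding the covariant derivative along $\sigma$ in $N$,
\[
\dot v_i = \sum_j \dot A_{ij}\,e_j \;+\; \sum_j A_{ij}\,\dot e_j,
\]
condition (3) reads $\langle \dot v_i, e_k\rangle = 0$ for every $k$, i.e.\ $\dot A_{ik} = -\sum_j A_{ij}\, C_{jk}(t)$, where $C_{jk}(t) := \langle \dot e_j(t), e_k(t)\rangle$. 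Differentiating $\langle e_j,e_k\rangle = \delta_{jk}$ in $t$ shows $C = -C^T$, so the linear ODE $\dot A = -AC$ with $A(0) = I$ has a unique global solution that stays in $O(m)$; hence $v_1(t),\dots,v_m(t)$ is an orthonormal frame of $T_{\sigma(t)}M_t$ satisfying (1) and (3). An identical argument applied to $f_1,\dots,f_{n-m}$ produces $\bn_1(t),\dots,\bn_{n-m}(t)$ with (2) and (4).

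For uniqueness, let $\tilde\bn_i,\tilde v_i$ be another such family. Since $v_j(t)$ and $\tilde v_i(t)$ both lie in $T_{\sigma(t)}M_t$, I may write $\tilde v_i(t) = \sum_j P_{ij}(t)\,v_j(t)$ with $P(t)\in O(m)$ smooth. Differentiating gives
\[
\dot{\tilde v}_i = \sum_j \dot P_{ij}\, v_j \;+\; \sum_j P_{ij}\,\dot v_j.
\]
Because $\dot v_j \in TM_t^{\perp}$ by the construction above, the tangential part of $\dot{\tilde v}_i$ is exactly $\sum_j \dot P_{ij}\,v_j$, and condition (3) applied to $\tilde v_i$ forces this to vanish. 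Hence $\dot P \equiv 0$, so $O^{(2)} := P(0)$ is the desired constant orthogonal matrix. The analogous argument on the normal side, using that $\dot\bn_j\in TM_t$, produces $O^{(1)}$.

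There is no genuine obstacle in this lemma: the construction is a standard linear ODE valued in $O(m)$ (respectively $O(n-m)$), and no curvature hypothesis of the paper enters. The only point that requires mild care is the antisymmetry of the connection matrix $C(t)$, which is precisely what keeps $A(t)$ inside $O(m)$ and therefore guarantees that the orthonormality of the initial frame is preserved for all $t$.
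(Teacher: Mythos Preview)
Your argument is correct. The paper itself does not prove this lemma at all: it simply states that the proof is completely analogous to \cite[Lemma 3.1]{Le} and omits it. Your construction---choosing a reference orthonormal frame in $TM_t$ (respectively $TM_t^\perp$), writing the sought frame as an $O(m)$-valued (respectively $O(n-m)$-valued) gauge against it, and solving the linear ODE $\dot A=-AC$ with antisymmetric $C$---is exactly the standard way to produce such ``parallel adapted'' frames, and your uniqueness argument via $\dot P=0$ is the natural one. Nothing is missing.
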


The proof of Lemma \ref{frame} and that of \cite[Lemma 3.1]{Le} is completely analogous and is therefore omitted. From now on, we call any orthonormal moving frame which satisfies the conditions in Lemma \ref{frame} a parallel adapted frame along $\sigma(\cdot)$.

Let $\bn_t$ be a normal vector in $TM_t^\perp$ and let $\Sh_t^{\bn_t}:TM_t\to TM_t$ be the shape operator of the submanifold $M_t$ defined by
\[
\left<\Sh_t^{\bn_t}(v_1),v_2\right>=-\left<\bar\nabla_{v_1}\bn_t,v_2\right>.
\]
Here $\bar\nabla$ denotes the Levi-Civita connection on $N$.

Recall that the mean curvature vector $\meanvec_t$ of $M_t$ is given by
\[
\meanvec_t=\sum_{i,j}\left<\Sh_t^{\bn_i(t)}(v_j(t)),v_j(t)\right>\bn_i(t).
\]

\begin{lem}\label{paflem2}
Let $\bn_1(t),...,\bn_{n-m}(t),v_1(t),...,v_m(t)$ be a parallel adapted frame along $\varphi_t(x)$, where $\varphi_t$ satisfies the following equation
\[
\dot\varphi_t=\sum_i F_i(t,\varphi_t)\bn_i(t).
\]

Let $A(t)$ and $G^k(t)$ be families of matrices defined by
\[
d\varphi_t(v_i(0))=\sum_jA_{ij}(t)v_j(t) \text{ and } G^k_{ij}(t)=\left<\Sh_t^{\bn_k(t)}(v_i(t)),v_j(t)\right>,
\]
respectively. Then
\[
\dot A(t)=-\sum_k F_k(t,\varphi_t)A(t)G^k(t),
\]
where $\nabla_t$ is the gradient with respect to the induced metric on $M_t$.
\end{lem}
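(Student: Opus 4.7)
The plan is to differentiate $W_i(t) := d\varphi_t(v_i(0))$ in two different ways and equate the tangential components. First I would introduce a curve $\gamma$ in $M$ with $\gamma(0)=x$ and $\gamma'(0) = v_i(0)$, so that $W_i(t) = \frac{D}{ds}\varphi_t(\gamma(s))\big|_{s=0}$. Interchanging the order of covariant differentiation gives
\[
\frac{D}{dt}W_i(t) = \frac{D}{ds}\dot\varphi_t(\gamma(s))\Big|_{s=0}.
\]
Since $\dot\varphi_t = \sum_k F_k(t,\cdot)\bn_k(t,\cdot)$ makes sense as a vector field along $M_t$ (after extending the frame $\bn_k(t,\cdot)$ arbitrarily to a neighborhood of $\varphi_t(x)$ in $M_t$, which is allowed because the final formula depends only on the values along $\varphi_t(x)$), one computes
\[
\frac{D}{dt}W_i(t) = \sum_k dF_k(W_i)\,\bn_k(t) + \sum_k F_k(t,\varphi_t)\,\bar\nabla_{W_i}\bn_k(t).
\]

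Next I would extract the tangential component of this expression. The first sum is purely normal, while the tangential part of $\bar\nabla_{W_i}\bn_k$ is precisely $-\Sh_t^{\bn_k(t)}(W_i)$ by the very definition of the shape operator. Writing $W_i(t) = \sum_l A_{il}(t) v_l(t)$, linearity gives
\[
\bigl(\bar\nabla_{W_i}\bn_k\bigr)^{T} = -\sum_l A_{il}(t)\,\Sh_t^{\bn_k(t)}(v_l(t)),
\]
and taking inner product with $v_j(t)$ yields $(A(t)G^k(t))_{ij}$. Hence the tangential part of $\frac{D}{dt}W_i(t)$ equals $-\sum_k F_k(t,\varphi_t) \sum_j (A(t)G^k(t))_{ij}\,v_j(t)$.

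On the other hand, differentiating $W_i(t) = \sum_j A_{ij}(t) v_j(t)$ directly gives
\[
\frac{D}{dt}W_i(t) = \sum_j \dot A_{ij}(t)\,v_j(t) + \sum_j A_{ij}(t)\,\dot v_j(t).
\]
By property (3) of a parallel adapted frame, each $\dot v_j(t)$ lies in the normal bundle $TM_t^{\perp}$, so the tangential part of the right-hand side is just $\sum_j \dot A_{ij}(t) v_j(t)$. Comparing the two tangential expressions and reading off the coefficient of $v_j(t)$ yields $\dot A_{ij}(t) = -\sum_k F_k(t,\varphi_t)(A(t)G^k(t))_{ij}$, which is the asserted matrix identity. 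The only mildly delicate point is justifying the splitting of $\bar\nabla_{W_i}\bn_k$ using the shape operator, which is standard once one notes that the normal part of $\bar\nabla_{W_i}\bn_k$ contributes nothing to the $v_j$-component; everything else is a routine chain-rule/covariant-derivative manipulation.
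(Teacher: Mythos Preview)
Your proof is correct and essentially identical to the paper's: both introduce a curve $\gamma$ with $\gamma'(0)=v_i(0)$, interchange the order of covariant differentiation to write $\frac{D}{dt}d\varphi_t(v_i(0))=\frac{D}{ds}\dot\varphi_t(\gamma(s))\big|_{s=0}$, expand via the Leibniz rule into a normal piece plus $F_k\,\bar\nabla_{d\varphi_t(v_i(0))}\bn_k$, and then equate tangential components using that $\dot v_j(t)\in TM_t^\perp$ and the definition of the shape operator.
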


\begin{proof}
Let $\gamma(s)$ be a curve in $M$ such that $\frac{d}{ds}\gamma(s)\Big|_{s=0}=v_i(0)$. Then
\[
\begin{split}
&\covar t d\varphi_t(v_i(0))=\sum_j\left(\dot A_{ij}(t)v_j(t)+A_{ij}(t)\dot v_j(t)\right).
\end{split}
\]

On the other hand, we have
\[
\begin{split}
&\covar t d\varphi_t(v_i(0))=\covar s\dot\varphi_t(\gamma(s))\Big|_{s=0}\\
&=\sum_k\left(\left<\nabla_t F_k(t,\varphi_t),d\varphi_t(v_i(0))\right>\bn_k(t)+ F_k(t,\varphi_t)\bar\nabla_{d\varphi_t(v_i(0))}\bn_k(t)\right).
\end{split}
\]
It follows that
\[
\dot A_{ij}(t)=-\sum_{l,k}A_{il}(t)F_k(t,\varphi_t)\left<\Sh_t^{\bn_k(t)}v_l(t),v_j(t)\right>.
\]
\end{proof}

\begin{proof}[Proof of Theorem \ref{genHui}]
Let $\rho_t$ be the density of $\varphi_t^*\mu_t$ with respect to $\mu_0$: $\rho_t\mu_0=\varphi_t^*\mu_t$. Let $\bn_1(t),...,\bn_{n-m}(t), v_1(t),...,v_m(t)$ be a parallel adapted frame along the path $\varphi_t(x)$ and let $A(t)$ be the family of matrices defined by
\[
d\varphi_t(v_i(0))=\sum_{j=1}^nA_{ij}(t)v_j(t).
\]
Then $\rho_t=\det A(t)$ and we have
\[
\begin{split}
&\frac{d}{dt}\int_{\varphi_t(M)}u_t\,d\mu_t=\frac{d}{dt}\int_{M}u_t(\varphi_t)\det A(t)\,d\mu_0\\
&=\int_{M}\Big[\dot u_t(\varphi_t)+u_t(\varphi_t)\tr(A(t)^{-1}\dot A(t))+\left<\bar\nabla u_t,\dot \varphi_t\right>_{\varphi_t}\Big]\det A(t)\,d\mu_0\\
&=\int_{\varphi_t(M)}\left(\dot u_t+\sum_kF_k\left<\bar\nabla u_t,\bn_k(t)\right>-u_t\sum_kF_k\tr(G^k(t))\right)d\mu_t.
\end{split}
\]

Let $u_t=\rho_{T-t}$. Then we have, by assumptions, $F_k(t,\cdot)=\tr(G^k(t))+\left<\bar\nabla U,\bn_k(t)\right>$ and $\dot u_t=-\bar\Delta u_t-\left<\bar\nabla U,\bar\nabla u_t\right> -(\bar \Delta U) u_t$. Then the above equation becomes
\[
\begin{split}
&\frac{d}{dt}\int_{\varphi_t(M)}u_t\,d\mu_t=\int_{\varphi_t(M)}\Big(-\bar\Delta u_t-\left<\nabla U,\nabla u_t\right>\\
& -(\bar \Delta U) u_t-u_t|\meanvec_t|^2+\left<\nabla_t^\perp u_t,\meanvec_t\right>-u_t\left<\nabla_t^\perp U,\meanvec_t\right>\Big)d\mu_t,
\end{split}
\]
where $\nabla_t^\perp u$ is the projection of $\bar\nabla u$ onto the normal bundle of $M_t$.

A simple calculation shows that
\[
\begin{split}
\Delta u&=\sum_{i=1}^n\left<\bar\nabla_{v_i}(\bar\nabla u-\sum_k\left<\bn_k,\bar\nabla u\right>\bn_k),v_i\right>\\
&=\bar\Delta u-\sum_k\left<\bar\nabla_{\bn_k}\bar\nabla u,\bn_k\right>+\sum_k\left<\bn_k,\bar\nabla u\right>\tr(G^k(t))\\
&=\bar\Delta u-\Delta_t^\perp u+\left<\meanvec,\nabla_t^\perp u\right>,
\end{split}
\]

Therefore, we have
\[
\begin{split}
&\frac{d}{dt}\int_{\varphi_t(M)}u_t\,d\mu_t=\int_{\varphi_t(M)}\Big(-\Delta u_t-\Delta_t^\perp u_t-u_t\Delta_t^\perp U\\
&-\left<\nabla U,\nabla u_t\right> -\Delta U u_t-u_t|\meanvec_t|^2+2\left<\nabla_t^\perp u_t,\meanvec_t\right>\Big)d\mu_t\\
&=\int_{\varphi_t(M)}\Big(-\Delta_t^\perp u_t-u_t\Delta_t^\perp U-u_t|\meanvec_t|^2\\
&+2u_t\left<\frac{\nabla_t^\perp u_t}{u_t},\meanvec\right>-u_t\left|\frac{\nabla_t^\perp u_t}{u_t}\right|^2+u_t\left|\frac{\nabla_t^\perp u_t}{u_t}\right|^2\Big)d\mu_t\\
&=-\int_{\varphi_t(M)}u_t\left(\Delta_t^\perp \log u_t+\Delta_t^\perp U+\left|\frac{\nabla_t^\perp u_t}{u_t}-\meanvec_t\right|^2\right) d\mu_t.
\end{split}
\]

\[
-\Delta_t^\perp\log \rho_{T-t}-\frac{1}{2}\Delta_t^\perp U_1\leq \frac{n-m}{2}a_{k_3}(T-t).
\]

By Theorem \ref{main4},
\[
\begin{split}
&\frac{d}{dt}\int_{\varphi_t(M)}u_t\,d\mu_t-\frac{n-m}{2}a_{k_3}(T-t)\int_{\varphi_t(M)}u_t\,d\mu_t\\
&\leq\int_{\varphi_t(M)}u_t\left(-\frac{1}{2}\Delta_t^\perp U-\left|\frac{\nabla_t^\perp u_t}{u_t}-\meanvec_t\right|^2\right) d\mu_t.
\end{split}
\]

Since $\dot b_k=a_kb_k$, the result follows.
\end{proof}

\smallskip

\section{A generalization of the Aronzon-B\'enilan estimate}

The Aronzon-B\'enilan estimate \cite{ArBe} is a differential Harnack inequality for the porous medium equation
\[
\dot\rho_t=\Delta(\rho_t^m).
\]

In this section, we apply Theorem \ref{main1} and prove the following generalization of the Aronzon-B\'enilan estimate.

\begin{thm}\label{AB}
Assume that the Ricci curvature of the underlying compact Riemannian manifold $M$ is non-negative. Let $U$ be a function on $M$ satisfying
\[
\Delta U\geq \frac{k_3}{2m}
\]
where $m-1+\frac{2}{n}>0$. Then any smooth positive solution $\rho_t$ of the equation
\[
\dot\rho_t=\Delta(\rho_t^m)+U\rho_t^{2-m}.
\]
satisfies
\[
\frac{2m}{m-1}\Delta(\rho_t^{m-1})\leq \frac{2n}{2+n(m-1)}a_{\frac{k_3(2+n(m-1))}{2n}}(t).
\]
\end{thm}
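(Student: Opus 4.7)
The plan is to apply Theorem~\ref{main1} with $F=\tr$ to $X_t = \nabla h_t$, choosing $h_t$ as the porous--medium analogue of the substitution $h=-2\log\rho-U_1$ used in the proof of Theorem~\ref{main3}. In the linear case the factor $-2$ was dictated by the cancellation of the $|\nabla\rho|^2/\rho^2$ cross--term in $\dot h + \tfrac{1}{2}|\nabla h|^2$; the analogous substitution here is
\[
h_t := -\tfrac{2m}{m-1}\,\rho_t^{m-1}.
\]
Indeed, writing the pressure $p := \tfrac{m}{m-1}\rho^{m-1} = -h/2$, which satisfies $\dot p = (m-1)p\,\Delta p + |\nabla p|^2 + mU$, a direct computation produces the clean identity
\[
\dot h_t + \tfrac{1}{2}|\nabla h_t|^2 = -\tfrac{m-1}{2}\,h_t\,\Delta h_t - 2mU.
\]

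Next I would take the Laplacian of this identity. Using $\Delta(h\Delta h) = (\Delta h)^2 + h\,\Delta^2 h + 2\langle \nabla h, \nabla \Delta h\rangle$, one obtains
\[
\Delta\bigl(\dot h_t + \tfrac{1}{2}|\nabla h_t|^2\bigr) = -\tfrac{m-1}{2}\,h_t\,\Delta(\Delta h_t) - (m-1)\langle \nabla h_t, \nabla \Delta h_t\rangle - \tfrac{m-1}{2}(\Delta h_t)^2 - 2m\,\Delta U.
\]
This already has exactly the shape demanded by hypothesis~(3) of Theorem~\ref{main1}: upon choosing $b_t := -\tfrac{m-1}{2}h_t = m\rho_t^{m-1}\ge 0$ and $Y_t := -(m-1)\nabla h_t$, the first two terms on the right are matched by the right--hand side $b_t\nabla^2 F(\nabla X_t) + \nabla F(\nabla X_t)\otimes Y_t$, and upon choosing $k_2 := (m-1)/2$ the $(\Delta h_t)^2$ term is absorbed into the $k_2 F(\nabla X_t)^2$ summand on the left.

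For the remaining data I would take $W_t := (k_3/n)\,I$, so that $\tr W_t = k_3$. Hypothesis~(1) with $F=\tr$ is the elementary inequality $|\hess h_t|^2 \ge (\Delta h_t)^2/n$, giving $k_1 := 1/n$. Hypothesis~(2) becomes $k_3 + \ric(\nabla h_t,\nabla h_t) \ge k_3$, which follows from $\ric\ge 0$. The leftover scalar piece of hypothesis~(3) reduces to $k_3 \le 2m\,\Delta U$, exactly the standing assumption $\Delta U \ge k_3/(2m)$. Hypothesis~(4) is $k_1 + k_2 = \tfrac{1}{n} + \tfrac{m-1}{2} = \tfrac{2+n(m-1)}{2n} > 0$, which is the assumption $m-1+\tfrac{2}{n}>0$. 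Theorem~\ref{main1} then delivers
\[
\Delta h_t \le \tfrac{2n}{2+n(m-1)}\,a_{\frac{k_3(2+n(m-1))}{2n}}(t),
\]
and substituting $\Delta h_t = -\tfrac{2m}{m-1}\Delta(\rho_t^{m-1})$ gives the estimate of the theorem.

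The only genuinely non--routine step is the first: recognising that the multiplicative constant $-2$ in $h_t = -\tfrac{2m}{m-1}\rho_t^{m-1}$ is forced by the requirement of $|\nabla\rho|^2$--cancellation in $\dot h_t + \tfrac{1}{2}|\nabla h_t|^2$. After that, the verification of hypotheses~(1)--(4) is essentially a matching of coefficients. A subsidiary point to watch is the non--negativity of $b_t = m\rho_t^{m-1}$, which holds whenever $m\ge 0$ and is automatic in the regime $m-1+\tfrac{2}{n}>0$ for $n\ge 2$.
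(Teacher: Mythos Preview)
Your proposal is correct and follows essentially the same route as the paper's own proof: the paper also sets $h_t=\frac{2m}{1-m}\rho_t^{m-1}$, derives the identity $\dot h_t+\frac{1}{2}|\nabla h_t|^2=\frac{1-m}{2}h_t\Delta h_t-2mU$, takes its Laplacian, and then defers to Theorem~\ref{main1}. Your write-up is in fact more explicit than the paper's, which leaves the identification of $b_t$, $Y_t$, $W_t$, $k_1$, $k_2$ and the verification of hypotheses (1)--(4) to the reader.
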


\begin{proof}
A computation shows that $h_t=\frac{2m}{1-m}\rho_t^{m-1}$ satisfies
\[
\dot h_t+\frac{1}{2}|\nabla h_t|^2=\frac{1}{2}(1-m)h_t\Delta h_t-2mU.
\]
It follows that
\[
\begin{split}
&\Delta\left(\dot h_t+\frac{1}{2}|\nabla h_t|^2\right)+\frac{1}{2}(m-1)(\Delta h_t)^2+2m\Delta U\\
&=(1-m)\left<\nabla h_t,\nabla \Delta h_t\right>+m\rho_t^{m-1}\Delta\Delta h_t.
\end{split}
\]

The rest follows from the assumptions and Theorem \ref{main1}.
\end{proof}

\begin{rem}
Note that the above argument works regardless whether $m$ is greater than $1$ or not. 
\end{rem}

\smallskip

\section{On Laplacian and Hessian comparison type theorems}

In this section, we prove versions of Laplacian and Hessian type comparison theorems for the following cost function
\begin{equation}\label{newcost}
c_{s,t}(x,y)=\inf_{\gamma(s)=x,\gamma(t)=y}\int_s^t\frac{1}{2}|\dot\gamma(\tau)-\nabla U_1(\gamma(\tau))|^2-U_2(\gamma(\tau))d\tau.
\end{equation}
More precisely,

\begin{thm}\label{main7}
Assume that
\begin{enumerate}
  \item the Ricci curvature of the underlying manifold $M$ is non-negative,
  \item $\Delta\left(U_2-\frac{1}{2}|\nabla U_1|^2\right)\geq k_3$ for some negative constant $k_3$.
\end{enumerate}
Then the cost function $c_{0,t}$ defined by (\ref{cost}) satisfies
\[
\Delta_x c_{0,t}(x_0,x)\leq \sqrt{-k_3n}\coth\left(\sqrt{-\frac{k_3}{n}}\,t\right),
\]
wherever $c_{0,t}(x_0,\cdot)$ is twice differentiable.
\end{thm}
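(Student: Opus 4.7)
\emph{Plan.} The plan is to mimic the matrix Riccati argument in the proof of Theorem \ref{main1}, applied to the characteristic flow of the Hamilton--Jacobi equation satisfied by the cost function.

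First, I would verify by a standard dynamic-programming computation that $f(\tau,y) := c_{0,\tau}(x_0,y)$ satisfies the Hamilton--Jacobi equation
\[
\partial_\tau f + \tfrac{1}{2}|\nabla f|^2 + \langle \nabla f,\nabla U_1\rangle + U_2 = 0
\]
wherever it is $C^2$, with Lagrangian $L(q,v) = \tfrac{1}{2}|v-\nabla U_1|^2 - U_2$ as in (\ref{newcost}), and with characteristic velocity $\dot\gamma = \nabla f + \nabla U_1$.

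Next, fix $x$ where $c_{0,t}(x_0,\cdot)$ is twice differentiable and let $\gamma:[0,t]\to M$ be the minimizer from $x_0$ to $x$. I introduce the time-dependent gradient field $X_\tau := \nabla(f+U_1)$ in a neighborhood of $\gamma(\tau)$; its flow is exactly the family of characteristics. Set $S(\tau) := (\nabla X_\tau)|_{\gamma(\tau)} = \hess(f+U_1)|_{\gamma(\tau)}$, a symmetric operator on $T_{\gamma(\tau)}M$. The matrix Riccati identity from the proof of Theorem \ref{main1} then applies verbatim:
\[
\dot S(\tau) + S(\tau)^2 + R(\tau) = M(\tau),
\]
with $R_{ij} = \langle \Rm(e_i,X_\tau)X_\tau,e_j\rangle$ and $M = \nabla(\dot X_\tau + \nabla_{X_\tau}X_\tau)$. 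A short calculation, inserting the Hamilton--Jacobi equation into $\dot X_\tau + \nabla_{X_\tau}X_\tau = \nabla\bigl(\dot f + \tfrac{1}{2}|\nabla(f+U_1)|^2\bigr)$, collapses to
\[
\dot X_\tau + \nabla_{X_\tau} X_\tau = -\nabla V,\qquad V := U_2 - \tfrac{1}{2}|\nabla U_1|^2,
\]
so $M = -\nabla^2 V$ and in particular $\tr M = -\Delta V \leq -k_3$ by hypothesis.

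Tracing the Riccati equation, applying Cauchy--Schwarz $\tr(S^2) \geq \tfrac{1}{n}(\tr S)^2$ (valid because $S$ is symmetric), and using $\ric\ge 0$ to discard $\tr R = \ric(X_\tau,X_\tau)$, I obtain the scalar differential inequality
\[
\frac{d}{d\tau}\tr(S(\tau)) + \frac{1}{n}(\tr S(\tau))^2 \leq -k_3.
\]
The function $\tilde a(\tau) := \sqrt{-k_3 n}\coth\bigl(\sqrt{-k_3/n}\,\tau\bigr) = n\,a_{k_3/n}(\tau)$ satisfies the corresponding ODE with equality, so the ODE comparison principle forces $\tr S(\tau)\leq \tilde a(\tau)$ on $(0,t]$ (provided the initial condition matches), and evaluating at $\tau = t$ yields the claimed Laplacian bound.

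The main obstacle I anticipate is initializing the ODE comparison at $\tau = 0^+$: one has to verify that $\tr S(\tau) = \Delta(f+U_1)(\tau,\gamma(\tau))$ blows up at the same rate $n/\tau$ as $\tilde a(\tau)$ near $\tau=0$. This reduces to the short-time expansion $c_{0,\tau}(x_0,y) = \tfrac{1}{2\tau}d(x_0,y)^2 + O(1)$, uniform on a neighborhood of $x_0$, which can be derived from the Euler--Lagrange equation together with a Laplace-type analysis of the minimizer for small $\tau$. Once this asymptotic is in hand, the rest is a line-by-line adaptation of the Riccati / comparison step already carried out for Theorem \ref{main1}.
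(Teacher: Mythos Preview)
Your approach is essentially the paper's own: set $X_\tau = \nabla(c_{0,\tau}(x_0,\cdot)+U_1)$, use the Hamilton--Jacobi equation to reduce $\dot X_\tau + \nabla_{X_\tau}X_\tau$ to $-\nabla(U_2 - \tfrac12|\nabla U_1|^2)$, and feed this into the traced Riccati identity (\ref{Riccati}) together with $\ric\geq 0$ and $\Delta V\geq k_3$; the paper simply packages this as an invocation of Theorem~\ref{main1}, handling the non-smooth endpoint by citing \cite{CaSi} for smoothness of $c_{0,s}(x_0,\cdot)$ at $\gamma(s)$ for each $s\in(0,t)$ so that the first-time argument there can be run along the single minimizer. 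The obstacle you anticipate at $\tau\to 0^+$ is real but lighter than a full short-time expansion of the cost: since all minimizers emanate from the single point $x_0$ one has $A(0)=0$, so either the backward-blow-up form of the first-time argument in Theorem~\ref{main1} (finiteness of $\tr S$ on $(0,t]$ already rules out any crossing with $n\,a_K$ for $K<k_3/n$) or a Sturm comparison on $\phi=(\det A)^{1/n}$ with $\phi(0)=0$ closes the loop without further asymptotics.
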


\begin{thm}\label{main8}
Assume that
\begin{enumerate}
  \item the sectional curvature of the underlying manifold $M$ is non-negative,
  \item $\nabla^2\left(U_2-\frac{1}{2}|\nabla U_1|^2\right)\geq k_3I$ for some negative constant $k_3$.
\end{enumerate}
Then the cost function $c_{0,t}$ defined by (\ref{cost}) satisfies
\[
\nabla_x^2 c_{0,t}(x_0,x)\leq \sqrt{-k_3}\coth\left(\sqrt{-k_3}\,t\right)I
\]
wherever $c_{0,t}(x_0,\cdot)$ is twice differentiable.
\end{thm}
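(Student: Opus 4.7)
The plan is to reduce Theorem \ref{main8} to the matrix Riccati-type estimate of Theorem \ref{main2}, applied to the gradient vector field generating the characteristics of the variational problem (\ref{newcost}). The Lagrangian $L(y,v) = \tfrac{1}{2}|v - \nabla U_1(y)|^2 - U_2(y)$ has Legendre transform $H(y,p) = \tfrac{1}{2}|p|^2 + \langle p, \nabla U_1(y)\rangle + U_2(y)$, so by dynamic programming, $h_t(y) := c_{0,t}(x_0, y)$ satisfies, wherever it is twice differentiable, the Hamilton--Jacobi equation
\[
\dot h_t + \tfrac{1}{2}|\nabla h_t|^2 + \langle \nabla h_t, \nabla U_1\rangle + U_2 = 0,
\]
and its characteristics (the minimizing curves) have velocity $\dot\gamma = \nabla h_t(\gamma) + \nabla U_1(\gamma)$.

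Guided by this, I would set $X_t := \nabla(h_t + U_1) = \nabla h_t + \nabla U_1$, a gradient vector field whose flow is exactly the characteristic flow. Using the HJ equation, a direct computation produces the clean identity
\[
\dot X_t + \nabla_{X_t} X_t \;=\; \nabla\!\left(\dot h_t + \tfrac{1}{2}|X_t|^2\right) \;=\; \nabla\!\left(\tfrac{1}{2}|\nabla U_1|^2 - U_2\right),
\]
whose covariant derivative is $-\nabla^2\bigl(U_2 - \tfrac{1}{2}|\nabla U_1|^2\bigr)$. I then invoke Theorem \ref{main2} with $k_2 = 0$, $b_t \equiv 0$, $Y_t \equiv 0$, and $W_t := \nabla^2\bigl(U_2 - \tfrac{1}{2}|\nabla U_1|^2\bigr)$. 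Condition (1) holds because $X_t$ is a gradient; condition (3) holds with equality by the identity just displayed; condition (2) follows from $W_t \geq k_3 I$ (the hypothesis) together with $\langle \Rm(v, X_t)X_t, v\rangle \geq 0$ for every tangent vector $v$, the pointwise consequence of non-negative sectional curvature; and condition (4) is trivial. The conclusion $\nabla X_t \leq a_{k_3}(t)\,I = \sqrt{-k_3}\coth(\sqrt{-k_3}\,t)\,I$ is the desired matrix Hessian bound (in the equality case $U_1 = -\tfrac{k}{2}|x|^2$, $U_2 \equiv 0$, one can check directly that $c_{0,t}(0,y) = \tfrac{k|y|^2 e^{2kt}}{e^{2kt}-1}$ and that the derived bound is sharp, consistently with the example in the introduction).

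The main obstacle is that Theorem \ref{main2} is stated for vector fields that are smooth on the whole compact manifold, whereas $h_t$ has singularities (at $x_0$ and, in general, along the cut locus of the cost) and is only twice differentiable on an open set of full measure. The fix is to not invoke Theorem \ref{main2} as a black box, but to re-run the matrix Jacobi/Riccati argument from its proof along a single minimizing characteristic $\gamma$ from $x_0$ to a fixed twice-differentiability point $y$. Along such a curve the vector field $X_t$ and the differential-of-flow matrix $A(t)$ are smooth, the symmetric matrix $S(t) = A(t)^{-1}\dot A(t)$ satisfies the Riccati equation $\dot S + S^2 + R = M$ pointwise in $t$, and the scalar comparison with $a_{k_3}$ in the proof of Theorem \ref{main2} carries over verbatim to yield the desired bound at $(t, y)$. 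This localization to individual characteristics is the only technical adjustment needed, and it simultaneously removes the compactness assumption used in Theorem \ref{main2}.
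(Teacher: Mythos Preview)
Your proposal is correct and follows essentially the same route as the paper: the paper (in its proof of the companion Theorem \ref{main7}, with Theorem \ref{main8} declared analogous) computes the Hamiltonian of the Lagrangian in (\ref{newcost}), writes the Hamilton--Jacobi equation (\ref{HJ}), sets $X_t=\nabla\bigl(c_{0,t}(x_0,\cdot)+U_1\bigr)$, and checks that $\nabla(\dot X_t+\nabla_{X_t}X_t)+\nabla^2\bigl(U_2-\tfrac12|\nabla U_1|^2\bigr)=0$ so that Theorem \ref{main2} applies with $k_2=0$, $b_t\equiv0$, $Y_t\equiv0$, $W_t=\nabla^2\bigl(U_2-\tfrac12|\nabla U_1|^2\bigr)$. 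Your treatment of the non-smooth case---replaying the Riccati comparison from the proof of Theorem \ref{main2} along the unique minimizing characteristic through a twice-differentiability point---is exactly the localization the paper invokes (``the proof of Theorem \ref{main1} still applies'' and ``(\ref{m5}) is an equality'').
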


\begin{rem}
  The function $x\mapsto c_{0,t}(x_0,x)$ is locally semi-concave. In particular, it is twice differentiable Lebesgue almost everywhere by Alexandrov's theorem. Therefore, the conclusions in Theorem \ref{main7} and \ref{main8} hold Lebesgue almost everywhere (see \cite{Vi1} for the definitions and the results).
\end{rem}

\begin{rem}
We can see that the above theorems are sharp by looking at the case $M=\Real^n$, $U_1\equiv 0$, and $U_2(x)=-\frac{k^2}{2}|x|^2$. We have
\[
\nabla^2\left(U_2-\frac{1}{2}|\nabla U_1|^2\right)= -k^2I,\quad \Delta\left(U_2-\frac{1}{2}|\nabla U_1|^2\right)= -k^2n
\]
which is the equality case in the second conditions of Theorem \ref{main7} and \ref{main8}. We also have
\[
\nabla^2_xc_{0,t}(0,x)=k\coth(kt)I,\quad \Delta_x c_{0,t}(0,x)=kn\coth(kt)
\]
which gives the equality case in the conclusions of Theorem \ref{main7} and \ref{main8}.
\end{rem}

\begin{rem}
A Bishop-Gromov type volume comparison theorem follows from Corollary \ref{maincor1}.
\end{rem}

The proof of Theorem \ref{main8} is similar to that of Theorem \ref{main7} and will be omitted.

\begin{proof}[Proof of Theorem \ref{main7}]
If $c_{0,t}(x_0,x)$ is smooth, then the result follows from Theorem \ref{main1} and \ref{main2}. Indeed, the Legendre transform of the Lagrangian
\[
L(x,v)=\frac{1}{2}g_{ij}(x)(v^i-g^{il}(x)(U_1)_{x_l}(x))(v^j-g^{jk}(x)U_{x_k}(x))-U_2(x)
\]
is given by
\[
\begin{split}
H(x,p)&=\sup_{v\in T_xM}[p(v)-L(x,v)]\\
&=\frac{1}{2}g^{ij}(x)p_ip_j+g^{ij}(x)p_i(U_1)_{x_j}(x)+U_2(x).
\end{split}
\]
Here we sum over repeated indices.

The corresponding Hamilton-Jacobi equation is given by
\begin{equation}\label{HJ}
\dot f_t+\frac{1}{2}|\nabla f_t|^2+\left<\nabla U_1,\nabla f_t\right> +U_2=0
\end{equation}
and $c_{0,t}(x_0,x)$ is a particular solution (see \cite{AgSa}).

If we set $X_t=\nabla\left(c_{0,t}(x_0,\cdot)+U_1\right)$, then
\[
\tr\left(\nabla\left( \dot X_t+\nabla_{X_t}X_t\right)+\nabla^2\left(U_2-\frac{1}{2}|\nabla U_1|^2\right)\right)=0.
\]

Therefore,
\[
\Delta_x c_{0,t}(x_0,x)\leq na_{\frac{k_3}{n}}(t)=\sqrt{-k_3n}\coth\left(\sqrt{-\frac{k_3}{n}}\,t\right)
\]
by Theorem \ref{main1}.

In general, if $x$ is a point where $c_{0,t}(x_0,\cdot)$ is twice differentiable, then there is a unique minimizer $\gamma$ to the infimum (\ref{cost}) joining $x_0$ and $x$. Moreover, $c_{0,s}(x_0,\cdot)$ is smooth at $\gamma(s)$ for each $s$ in $(0,t)$ (see \cite{CaSi}). Therefore, the proof of Theorem \ref{main1} still applies. Note that, in this case, (\ref{m5}) is an equality.
\end{proof}

\smallskip

\section{On the semigroup approach}

In this section, we give a semigroup proof of Theorem \ref{main3} and \ref{AB} which does not require any use of maximum principle. Such a proof was first given by \cite{BaLe}, assuming that the equation
\begin{equation}\label{eq}
\dot\rho_t=L\rho_t
\end{equation}
is given by an operator $L$ without constant term which is self-adjoint with respect to a weighted $L^2$ inner-product.

In the case of the heat equation, the key idea is to consider expressions of the form
\[
P_{T-t}(\rho_t|\nabla \rho_t|^2)\quad\text{ and }\quad P_{T-t}\dot\rho_t,
\]
where $P_t$ is the heat semigroup.

Since the heat semi-group is symmetric, it is equivalent to consider the followings instead
\begin{equation}\label{eq2}
\int_M\rho_t|\nabla \rho_t|^2 \varrho_{T-t} d\vol\quad\text{ and }\quad \int_M\dot\rho_t\varrho_{T-t} d\vol,
\end{equation}
where $\varrho_t$ ranges over all solutions of (\ref{eq}).

When $L$ is not self-adjoint but still linear, we also consider the expressions in (\ref{eq2}). However, in this case, $\varrho_t$ ranges over solutions of the equation
\[
\dot\varrho_t=L^*\varrho_t
\]
instead, where $L^*$ is the adjoint of $L$.

\begin{proof}[Proof of Theorem \ref{main3}]
Let $\varrho_t$ be a positive solution of the equation
\[
\dot\varrho_t=\Delta\varrho_t-\left<\nabla U_1,\nabla\varrho_t\right>+U_2\varrho_t.
\]

Let $k_t$ be a one-parameter family of smooth functions. A computation shows that
\[
\begin{split}
&\frac{d}{dt}\int_{M}\rho_t\,k_t\,\varrho_{T-t}\,d\vol=\int_{M}\dot\rho_t\,k_t\,\varrho_{T-t} +\rho_t\,\dot k_t\,\varrho_{T-t}-\rho_t\,k_t\,\dot\varrho_{T-t}\,d\vol\\
&=\int_{M}\rho_t\,(\dot k_t-\Delta k_t-2\left<\nabla f_t,\nabla k_t\right>)\,\varrho_{T-t}\,d\vol,
\end{split}
\]
where $f_t=\log\rho_t+\frac{1}{2} U_1$.

It follows that $c:=\int_{M}\rho_t\varrho_{T-t}\,d\vol$ is independent of $t$. By Bochner formula, we also have
\[
\begin{split}
&\frac{d}{dt}\int_{M}\rho_t(\Delta f_t)\,\varrho_{T-t}d\vol\\
&= \int_{M}\rho_t\left(2|\nabla^2 f_t|^2+2\ric(\nabla f_t,\nabla f_t)-\frac{1}{2}\Delta V\right)\varrho_{T-t}d\vol\\
&\geq \int_{M}\rho_t\left(\frac{2}{n}(\Delta f_t)^2-\frac{k^2n}{2}\right)\varrho_{T-t}d\vol\\
&\geq \int_{M}\rho_t\left(\frac{4a(t)\Delta f_t}{n}-\frac{2a(t)^2}{n}-\frac{k^2n}{2}\right)\varrho_{T-t}d\vol.
\end{split}
\]

So $b(t):=\int_{M}\rho_t(\Delta h_t)\,\varrho_{T-t}d\vol$ satisfies
\[
\dot b\geq \frac{4a(t)}{n}b(t)-\frac{2a(t)^2c}{n}-\frac{k^2nc}{2}.
\]

If $a(t)=-\frac{kn}{2}\coth(kt)$, then
\[
\dot b(t)\geq -2k b(t)\coth(kt)-\frac{k^2n(\coth^2(kt)+1)c}{2}.
\]
It follows that
\[
\begin{split}
\int_{M}\rho_t(\Delta f_t)\,\varrho_{T-t}d\vol&=b(t)\geq -\frac{kn}{2}\coth(kt)c\\
&=-\frac{kn}{2}\coth(kt)\int_{M}\rho_t\varrho_{T-t}\,d\vol.
\end{split}
\]

By setting $t=T$, we obtain
\[
\begin{split}
\int_M\rho_T(\Delta f_T)\,\varrho_0d\vol=-\frac{kn}{2}\coth(kt)\int_M\rho_T\varrho_0\,d\vol.
\end{split}
\]

Since $\varrho_0$ is arbitrary, we must have
\[
\dot f_t-|\nabla f_t|^2+\frac{1}{2}V=\Delta f_t\geq -\frac{kn}{2}\coth(kt).
\]
\end{proof}

\smallskip

\section{The generalized Li-Yau estimate without compactness assumption}\label{nocompact}

In this section, we give another prove of Theorem \ref{main} without making any compactness assumption. The proof uses the standard localization argument as in \cite{LiYa}.

\begin{proof}[Proof of Theorem \ref{main}]
Let $\rho_t$ be a positive solution of the equation $\dot \rho_t=\Delta \rho_t+\left<\nabla U_1\nabla \rho_t\right>+U_2 \rho_t$ and let $f_t=\log \rho_t+\frac{1}{2}U_1$. Then $\dot f_t=|\nabla f_t|^2 +U_2-\frac{1}{4}|\nabla U_1|^2-\frac{1}{2}\Delta U_1+\Delta f_t=|\nabla f_t|^2+\Delta f_t -\frac{1}{2}V$, where $V=-2U_2+\frac{1}{2}|\nabla U_1|^2+\Delta U_1$. It follows that
\[
\ddot f_t-\Delta \dot f_t-2\left<\nabla f_t,\nabla \dot f_t\right>=0
\]
and
\[
\begin{split}
&\frac{d}{dt} |\nabla f_t|^2-\Delta|\nabla f_t|^2-2\left<\nabla f_t,\nabla|\nabla f_t|^2\right>\\
&\leq -\left<\nabla f_t,\nabla V\right>-\frac{2}{n}\left(\dot f_t-|\nabla f_t|^2+\frac{1}{2}V\right)^2.
\end{split}
\]

Let $F_t=a_1\dot f_t+a_2|\nabla f_t|^2+a_3 V+a_4$, where
\[
a_2=\frac{(\exp(2tk)-1)^2}{\exp(2tk)}=(\exp(tk)-\exp(-tk))^2,
\]
$a_1=-\alpha a_2$, $a_3=-\frac{\alpha}{2}a_2$, $\alpha>1$. $a_4$ is a function of time $t$ to be determined.

Then
\[
\begin{split}
&\dot F_t-\Delta F_t-2\left<\nabla f_t,\nabla F_t\right>\\
&\leq -a_2\left<\nabla f_t,\nabla V\right>-\frac{2a_2}{n}\left(\dot f_t-|\nabla f_t|^2+\frac{1}{2}V\right)^2\\
&-a_3\Delta V-2a_3\left<\nabla f_t,\nabla V\right>-\alpha\dot a_2\dot f_t+\dot a_2|\nabla f_t|^2+\dot a_3 V+\dot a_4\\
&\leq -\frac{2a_2}{n}\left(\frac{1}{\alpha a_2}F_t+\left(1-\frac{1}{\alpha}\right)|\nabla f_t|^2-\left(\frac{a_3}{\alpha a_2}+\frac{1}{2}\right) V-\frac{a_4}{\alpha a_2}\right)^2\\
&-a_3\Delta V-(a_2+2a_3)\left<\nabla f_t,\nabla V\right>+\frac{\dot a_2}{a_2}F_t+\left(\dot a_3-\frac{\dot a_2a_3}{a_2}\right) V+\dot a_4-\frac{\dot a_2a_4}{a_2}.
\end{split}
\]

Let $G_t=\eta F_t$, where $\eta$ is a cut off function. Let us fix a time $t$. At a maximum point of $G_t$, we have
\[
\nabla F_t=-\frac{F_t}{\eta}\nabla \eta, \quad \Delta F_t\leq \frac{2F_t}{\eta^2}|\nabla \eta|^2-\frac{F_t}{\eta}\Delta\eta,\quad \dot F_t\geq 0.
\]

Therefore,
\[
\begin{split}
&-\frac{2F_t}{\eta^2}|\nabla \eta|^2+\frac{F_t}{\eta}\Delta\eta+2\frac{F_t}{\eta}\left<\nabla f_t,\nabla \eta\right>\\
&\leq -\frac{2a_2}{n}\left(\frac{1}{\alpha a_2}F_t+\left(1-\frac{1}{\alpha}\right)|\nabla f_t|^2-\left(\frac{a_3}{\alpha a_2}+\frac{1}{2}\right) V-\frac{a_4}{\alpha a_2}\right)^2\\
&-a_3\Delta V-(a_2+2a_3)\left<\nabla f_t,\nabla V\right>+\frac{\dot a_2}{a_2}F_t+\left(\dot a_3-\frac{\dot a_2a_3}{a_2}\right) V+\dot a_4-\frac{\dot a_2a_4}{a_2}.
\end{split}
\]

Let $r:[0,\infty)\to [0,1]$ be a function such that $r(x)=1$ if $x\leq R$, $r(x)=0$ if $x\geq 2R$, $r'\leq 0$, $\frac{r'(x)^2}{r(x)}\leq \frac{C}{R^2}$, and $|r''|\leq \frac{C}{R^2}$, where $C>0$ is a constant. Let us fix a point $x_0$ and let us denote the ball of radius $R$ centered at $x_0$ by $B_R$. Let $\eta=r(d(x_0,x))$, where $d(x_0,x)$ is the distance from $x_0$ to $x$. It follows that
\[
\frac{|\nabla\eta|^2}{\eta}\leq \frac{C}{R^2}.
\]

Since the Ricci curvature is non-negative, we have
\[
\Delta\eta\geq -\frac{C}{R^2}
\]
by the Laplacian comparison theorem.

Then
\[
\begin{split}
&-\frac{3CF_t}{\eta R^2}-\frac{2\sqrt C F_t^{3/2}}{\sqrt\eta R}\frac{|\nabla f_t|}{\sqrt{F_t}}\\
&\leq -\frac{2a_2}{n}\left(\frac{1}{\alpha a_2}F_t+\left(1-\frac{1}{\alpha}\right)|\nabla f_t|^2-\frac{a_4}{\alpha a_2}\right)^2\\
&+\frac{\alpha nk^2}{2}a_2+a_2(\alpha-1)|\nabla f_t||\nabla V|+\frac{\dot a_2}{a_2}F_t+\dot a_4-\frac{\dot a_2a_4}{a_2}.
\end{split}
\]

Let $H_t=\frac{|\nabla f_t|^2}{F_t}$ and assume that $|\nabla V|\leq C$, then
\[
\begin{split}
&-\frac{3CF_t}{\eta R^2}-\frac{2\sqrt C \sqrt{H_t} F_t^{3/2}}{\sqrt\eta R}\\
&\leq -\frac{2a_2F_t^2}{n}\left(\frac{1}{\alpha a_2}+\left(1-\frac{1}{\alpha}\right)H\right)^2+\frac{4a_4F_t}{n\alpha}\left(\frac{1}{\alpha a_2}+\left(1-\frac{1}{\alpha}\right)H_t\right)\\
&+a_2C(\alpha-1)\sqrt{F_t H_t}+\frac{\dot a_2}{a_2}F_t+\dot a_4-\frac{\dot a_2a_4}{a_2}-\frac{2a_4^2}{\alpha^2n a_2}+\frac{\alpha nk^2}{2}a_2.
\end{split}
\]

Let us choose $a_4$ such that $\frac{a_4}{a_2}=-\frac{n\alpha^{3/2}k}{2}\coth\left(\frac{kt}{\sqrt\alpha}\right)$. Note that $\frac{a_4}{a_2}$ satisfies the following Riccati equation
\[
\frac{d}{dt}\left(\frac{a_4}{a_2}\right)-\frac{2}{\alpha^2n}\left(\frac{a_4}{a_2}\right)^2+\frac{\alpha nk^2}{2}=0.
\]
Then
\[
\begin{split}
0&\leq -\frac{2a_2G_t^2}{n}\left(\frac{1}{\alpha a_2}+\left(1-\frac{1}{\alpha}\right)H_t\right)^2+\frac{4a_4\eta G_t}{n\alpha}\left(\frac{1}{\alpha a_2}+\left(1-\frac{1}{\alpha}\right)H_t\right)\\
&+a_2C(\alpha-1)\sqrt{G_t H_t}+\frac{\dot a_2}{a_2}\eta G_t+\frac{3CG_t}{ R^2}+\frac{2\sqrt C \sqrt{H_t} G_t^{3/2}}{R}.
\end{split}
\]

Since $a_4\leq 0$, it follows that
\[
\begin{split}
0&\leq -\frac{2a_2G_t^2}{n}\left(\frac{1}{\alpha a_2}+\left(1-\frac{1}{\alpha}\right)H_t\right)^2+\left(\frac{\dot a_2}{a_2}+\frac{4a_4}{n\alpha^2 a_2}\right)\eta G_t\\
&+a_2C(\alpha-1)\sqrt{G_t H_t}+\frac{3CG_t}{ R^2}+\frac{2\sqrt C \sqrt{H_t} G_t^{3/2}}{R}.
\end{split}
\]

Since $\alpha\geq 1$, a computation shows that
\[
\frac{\dot a_2}{a_2}+\frac{4a_4}{n\alpha^2 a_2}=2k\coth(kt)-\frac{2k}{\sqrt \alpha}\coth\left(\frac{kt}{\sqrt\alpha}\right)\geq 0.
\]

Therefore,
\[
\begin{split}
0&\leq -\frac{2a_2G_t^2}{n}\left(\frac{1}{\alpha a_2}+\left(1-\frac{1}{\alpha}\right)H_t\right)^2+\left(\frac{\dot a_2}{a_2}+\frac{4a_4}{n\alpha^2 a_2}\right) G_t\\
&+a_2C(\alpha-1)\sqrt{G_t H_t}+\frac{3CG_t}{ R^2}+\frac{2\sqrt C \sqrt{H_t} G_t^{3/2}}{R}.
\end{split}
\]

It follows that $G_t$ and hence the restriction of $F_t$ to the ball $B_R$ are less than or equal to the largest zero of the function
\[
\begin{split}
x\mapsto &-\frac{2a_2x^2}{n}+\left(\frac{\dot a_2}{a_2}+\frac{4a_4}{n\alpha^2 a_2}\right)B x\\
&+a_2AC(\alpha-1)\sqrt{x }+\frac{3BCx}{ R^2}+\frac{2A\sqrt C x^{3/2}}{R},
\end{split}
\]
where $A=\frac{\sqrt{H_t}}{\left(\frac{1}{\alpha a_2}+\left(1-\frac{1}{\alpha}\right)H_t\right)^2}$ and $B=\frac{1}{\left(\frac{1}{\alpha a_2}+\left(1-\frac{1}{\alpha}\right)H_t\right)^2}$

Since $B\leq \alpha^2a_2^2$ and $A\leq C$ are bounded independent of $R$, we can let $R\to\infty$. Therefore, $F_t$ is less than or equal to the largest zero of the function
\begin{equation}\label{eqn}
x\mapsto -\frac{2a_2x^2}{n}+\left(\alpha^2a_2\dot a_2+\frac{4a_2a_4}{n}\right) x+a_2AC(\alpha-1)\sqrt{x }.
\end{equation}

Now let $\alpha\to 1$ in (\ref{eqn}). Then we have $F_t\leq 0$. The result follows.
\end{proof}

\smallskip

\end{document}